\numberwithin{equation}{section}
\providecommand{\customgenericname}{}
\newcommand{\newcustomtheorem}[2]{\newenvironment{#1}[1]
  {\renewcommand\customgenericname{#2}
   \renewcommand\theinnercustomgeneric{##1}\innercustomgeneric}{\endinnercustomgeneric}}
\newcommand{\newcustomlemma}[2]{\newenvironment{#1}[1]
  {\renewcommand\customgenericname{#2}
   \renewcommand\theinnercustomgeneric{##1} \innercustomgeneric}{\endinnercustomgeneric}}
\newcommand{\gap}{7}
\newcommand{\gaptt}{6}
\newcommand{\gapt}{7}
\theoremstyle{plain}
\newtheorem{theorem}{Theorem}
\newtheorem{proposition}[theorem]{Proposition}
\newtheorem*{theorem*}{Theorem}
\newtheorem*{lemma*}{Lemma}
\newtheorem*{proposition*}{Proposition}
\newtheorem*{corollary*}{Corollary}
\newtheorem*{remark*}{Remark} 
\newtheorem*{remarks*}{Remarks}
\newtheorem*{conj*}{Conjecture}
\def\f{\frac}
\def\Z{{\mathbb Z}}
\newcommand{\LL}{\mathcal{L}}
\newcommand{\HH}{\mathcal{H}}
\newcommand{\OO}{\mathcal{O}}
\newcommand{\bbz}{\mathbb{Z}}
\newcommand{\bbs}{\mathbb S}
\newcommand{\bbr}{\mathbb{R}}
\newcommand{\bbrn}{\mathbb R^n}
\newcommand{\bbn}{\mathbb{N}}
\newcommand{\xxxi}{\vec{\boldsymbol{\xi}\;}}
\newcommand{\yyy}{\vec{\boldsymbol{y}}}
\newcommand{\xxi}{\vec{\boldsymbol{\xi}}}
\def\000{\vec{\boldsymbol{0}}}
\def\ii{{\mathrm{i}}}
\def\ga{\gamma}
\def\j{\xi}
\def\ep{\epsilon}
\def\Om{\Omega}
\newcommand{\q}{\quad}
\newcommand{\qq}{\qquad}
\DeclareFontFamily{U}{mathx}{\hyphenchar\font45}
\DeclareFontShape{U}{mathx}{m}{n}{
	<5> <6> <7> <8> <9> <10>
	<10.95> <12> <14.4> <17.28> <20.74> <24.88>
	mathx10
}{}
\def\f{\frac}
\def\wh{\widehat}
\newcommand{\wt}{\widetilde}
\newcommand{\supp}{\mathrm{supp}}
\begin{document}

\author{Bae Jun Park}
\address{B. Park, Department of Mathematics, Sungkyunkwan University, Suwon 16419, Republic of Korea}
\email{bpark43@skku.edu}

\thanks{The author is supported in part by NRF grant 2022R1F1A1063637 and by POSCO Science Fellowship of POSCO TJ Park Foundation. The author is grateful for support by the Open KIAS Center at Korea Institute
for Advanced Study.}

 \title{Multilinear estimates for maximal rough singular integrals} 
\subjclass[2020]{Primary 42B20, 42B25, 47H60}
\keywords{Multilinear estimates, Rough Singular integral operator, Maximal operators, Pointwise convergence}

\begin{abstract} 
In this work, we establish  $L^{p_1}\times \cdots\times L^{p_m}\to L^p$ bounds for maximal multi-(sub)linear singular integrals associated with homogeneous kernels $\frac{\Omega(\yyy')}{|\yyy|^{mn}}$
 where $\Omega$ is an $L^q$ function on the unit sphere $\mathbb{S}^{mn-1}$ with vanishing moment condition and $q>1$.
 As an application, we obtain almost everywhere convergence results for the associated doubly truncated multilinear singular integrals.
\end{abstract}

\maketitle


\section{Introduction}\label{introsec}

Let $n,m$ be integers with $n\ge 1$ and $m\ge 2$, and  consider an integrable function $\Omega$ on the unit sphere $\mathbb{S}^{mn-1}$ with the mean value zero property 
\begin{equation}\label{vanishingmtcondition}
\int_{\mathbb{S}^{mn-1}}\Omega(\yyy')~ d\sigma(\yyy')=0
\end{equation} 
where $d\sigma$ stands for the surface measure on $\mathbb{S}^{mn-1}$, $\yyy:=(y_1,\dots,y_m)\in (\bbrn)^m$, and $\yyy':=\frac{\yyy}{|\yyy|}\in \mathbb{S}^{mn-1}$.
We set
\begin{equation}\label{kernelrep}
K(\yyy):=\frac{\Omega(\yyy')}{|\yyy|^{mn}}, \qquad \yyy \neq \000, 
\end{equation} 
and define the corresponding truncated multilinear operator $\LL_{\Om}^{(\epsilon)}$  by
$$
\mathcal L^{(\ep)}_{\Om}\big(f_1,\dots,f_m\big)(x):=\int_{|\yyy|>\epsilon}{K(\yyy)\prod_{j=1}^{m}f_j(x-y_j)}~d\yyy
$$
 for Schwartz functions $f_1,\dots,f_m$ on $\bbrn$.
 By taking $\epsilon\searrow 0$, we also define the multilinear homogeneous singular integral operator 
\begin{align*}
\mathcal L_{\Om}\big(f_1,\dots,f_m\big)(x)&:=\lim_{\epsilon\searrow 0}\mathcal L^{(\ep)}_{\Om}\big(f_1,\dots,f_m\big)(x) =p.v. \int_{(\bbrn)^m}{K(\yyy)\prod_{j=1}^{m}f_j(x-y_j)}~d\yyy.
\end{align*}
This is still well-defined for any Schwartz functions $f_1, \dots,f_m$ on $\bbrn$.

There were several remarkable boundedness results in the linear setting ($m=1$ and $n\ge 2$) and  these results have been later extended to multilinear  cases when $m\ge 2$.
In this paper, we will mainly focus on the multilinear operator, leaving only some references \cite{Ca_Zy1956, Ch1988,Ch_Ru1988, Co_We1977, Co1979, Ho1988, Se1996, St2001, Ta1999} for the linear case, as many other relevant papers provide detailed historical background on the results for linear operators. 

The bilinear ($m=2$) singular integral operators in the one-dimensional setting $n=1$ were first studied by Coifman and Meyer in \cite{Co_Me1975} who established the $L^{p_1}(\bbr)\times L^{p_2}(\bbr)\to L^p(\bbr)$ boundedness for the bilinear operator $\LL_{\Omega}$ when $\Omega$ is a function of bounded variation on the unit circle $\mathbb{S}^1$, and this result was later extended to general dimensions $n\ge 1$ and $m$-linear operators ($m\ge 2$) by Grafakos and Torres \cite{Gr_To2002} who assumed $\Omega$ is a Lipschitz function on $\mathbb{S}^{mn-1}$. Both results need some smoothness assumptions on $\Omega$ and the results  were developed in the bilinear case by Grafakos, He, and Honz\'ik \cite{Gr_He_Ho2018} who  addressed the case when $\Omega$ merely belongs to $L^{\infty}(\mathbb{S}^{2n-1})$. Especially, they obtained the initial estimate $L^2\times L^2\to L^1$ for $\LL_{\Omega}$ even when $\Omega\in L^2(\mathbb{S}^{2n-1})$, introducing a new approach using a wavelet decomposition of Daubechies in \cite{Da1988}. The initial estimate was soon improved by Grafakos, He, and Slav\'ikov\'a \cite{Gr_He_Sl2020} who weakened the assumption $\Omega\in L^2(\mathbb{S}^{2n-1})$ to $\Omega\in L^q(\mathbb{S}^{2n-1})$ for $q>\frac{4}{3}$, and this result was extented to arbitrary exponent $1<p_1,p_2<\infty$ and $\frac{1}{2}<p<\infty$ by He and the author in \cite{He_Park2023} under the assumption that $\Omega\in L^q(\mathbb{S}^{2n-1})$ for $q>\max{(\frac{4}{3},\frac{p}{2p-1})}$.
 For general multilinear cases, Grafakos, He, Honz\'ik, and the author \cite{Gr_He_Ho_Park2023} derived an initial boundedness result 
 $L^2\times \cdots\times L^2\to L^{\frac{2}{m}}$ when $\Omega\in L^{q}(\mathbb{S}^{mn-1})$ for $q>\frac{2m}{m+1}$. The wavelet decomposition of Daubechies was still an essential tool in the multilinear case, but more intricate technical issues emerged  as the target space $L^{\frac{2}{m}}(\bbrn)$ is not a Banach space when $m\ge 3$. Later, the multilinear initial estimate was generalized to the whole range $1<p_1,\dots,p_m<\infty$ and $\frac{1}{m}<p<\infty$ in \cite{Gr_He_Ho_Park_JLMS}, and Dosidis and Slav\'ikov\'a \cite{Do_Sl2024} improved the estimates in a certain range of $p_1,\dots,p_m$. Interestingly, they proved that $\Omega\in L^q(\mathbb{S}^{mn-1})$ for $q>1$ is enough for the $L^{p_1}\times \cdots\times L^{p_m}\to L^p$ boundedness to hold when $1<p,p_1,\dots,p_m<\infty$.

In order to comprehensively describe all of the above results, let us introduce some notation.
Let $J_m:=\{1,\dots,m\}$.
For $0\le s\le 1$ and any subsets $J\subseteq  J_m$, let
\begin{equation*}
\HH_J^m(s):=\Big\{(t_1,\dots,t_m)\in (0,1)^m: \sum_{j\in J}(s-t_j)>-(1-s) \Big\},
\end{equation*}
\begin{equation*}
\OO_J^m(s):=\Big\{(t_1,\dots,t_m)\in (0,1)^m: \sum_{j\in J}(s-t_j)<-(1-s) \Big\}
\end{equation*}
and we define
\begin{equation*}
\HH^m(s):=\bigcap_{J\subseteq J_m}\HH_J^m(s).
\end{equation*}
See Figure \ref{fighs} for the shape of  $\mathcal{H}^3(s)$ in the trilinear case.
We observe that 
$$\mathcal{H}^m(s_1)\subset \mathcal{H}^m(s_2)\subset (0,1)^m \q \text{ for }~s_1<s_2$$
and $\lim_{s\nearrow 1}\mathcal{H}^m(s)=\mathcal{H}^m(1)=(0,1)^m$.
Moreover, 
\begin{align*}
\HH^m(0)&=\Big\{(t_1,\dots,t_m)\in (0,1)^m: t_1+\cdots+t_m<1 \Big\}.
\end{align*}
We also define the rectangle 
\begin{equation}\label{defvlm}
\mathscr{V}^m_l(s):=\{(t_1,\dots,t_m): 0< t_l< 1 \q \text{and }~ 0<  t_j< s ~\text{ for }~ j\not= l\}
\end{equation}
for $l\in J_m$ and $s>0$.
As known in \cite[Lemma 5.4]{Gr_He_Ho_Park_JLMS}, if $0<s<1$, then
\begin{equation}\label{convexhull}
\text{$\mathcal H^m(s)$
is the convex hull of the  rectangles
$\mathscr{V}^m_l(s)$, $l=1,\dots,m$.}
\end{equation}

 \begin{figure}[h]
\begin{tikzpicture}

\path[fill=green!5] (0,0,3)--(1,0,3)--(2.25,0,1.25)--(2.25,0,0)--(2.25,0.85,0)--(1,2.25,0)--(0,2.25,0)--(0,2.25,1.25)--(0,0.85,3)--(0,0,3);

\draw[dash pattern= { on 2pt off 1pt}](1,0.85,3)--(1,2.25,1.25)--(2.25,0.85,1.25)--(1,0.85,3);
\draw[dash pattern= { on 2pt off 1pt}] (1,0.85,3)--(1,0,3)--(2.25,0,1.25)--(2.25,0.85,1.25);
\draw[dash pattern= { on 2pt off 1pt}] (1,2.25,1.25)--(1,2.25,0)--(2.25,0.85,0)--(2.25,0.85,1.25);
\draw[dash pattern= { on 2pt off 1pt}] (1,0.85,3)--(0,0.85,3)--(0,2.25,1.25)--(1,2.25,1.25);
\draw[dash pattern= { on 2pt off 1pt}] (2.25,0,1.25)--(2.25,0,0)--(2.25,0.85,0);
\draw[dash pattern= { on 2pt off 1pt}] (0,2.25,1.25)--(0,2.25,0)--(1,2.25,0);
\draw[dash pattern= { on 2pt off 1pt}] (1,0,3)--(0,0,3)--(0,0.85,3);

\draw[dash pattern= { on 1pt off 1pt}] (0,0,0)--(2.25,0,0);
\draw[dash pattern= { on 1pt off 1pt}] (0,0,0)--(0,2.25,0);
\draw[dash pattern= { on 1pt off 1pt}] (0,0,0)--(0,0,3);
\draw [->] (0,0,3)--(0,0,4);
\draw [->] (0,2.25,0)--(0,2.85,0);
\draw [->] (2.25,0,0)--(3,0,0);

\node [below] at (3,0,0) {\tiny$t_1$};
\node [left] at (0,2.85,0) {\tiny$t_2$};
\node [below] at (0,0,4) {\tiny$t_3$};

\node[above right] at (2.25,0,0) {\tiny$(1,0,0)$};

\node[left] at (0,2.25,0) {\tiny$(0,1,0)$};

\node[left] at (0,0,3) {\tiny$(0,0,1)$};

\node[right] at (2.25,0,1.25) {\tiny$(1,0,s)$};

\node[right] at (1.2,0.9,1.25) {\tiny$(1,s,s)$};

\node[below] at (1,0,3) {\tiny$(s,0,1)$};

\node[right] at (0.9,0.75,3) {\tiny$(s,s,1)$};

\node[left] at (0,0.85,3) {\tiny$(0,1,s)$};

\node[left] at (0,2.25,1.25) {\tiny$(0,s,1)$};

\node[above] at (1,2.25,0) {\tiny$(s,1,0)$};

\node[right] at (0.9,2.25,1.95) {\tiny$(s,1,s)$};

\node[right] at (2.25,0.85,0) {\tiny$(1,s,0)$};

\end{tikzpicture}
\caption{The region $\mathcal{H}^3(s)$ }\label{fighs}
\end{figure}
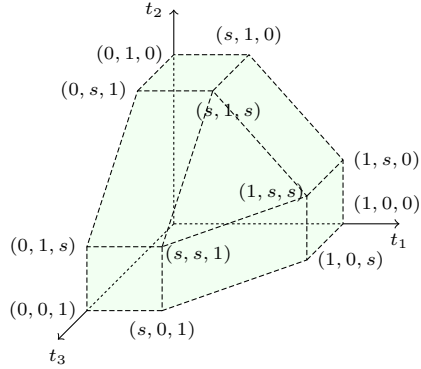

\begin{customthm}{A}\cite{Do_Sl2024, Gr_He_Ho2018, Gr_He_Ho_Park2023, Gr_He_Sl2020, He_Park2023}\label{knownbdresult}
Let $0<s<1$, $1<p_1,\dots,p_m<\infty$, and $\frac{1}{m}<p<\infty$ with $\frac{1}{p}=\frac{1}{p_1}+\cdots+\frac{1}{p_m}$.
Suppose that
$$\Big(\frac{1}{p_1},\cdots, \frac{1}{p_m} \Big)\in \HH^m(s)$$
and $\Omega\in L^{\frac{1}{1-s}}(\mathbb{S}^{mn-1})$ with \eqref{vanishingmtcondition}. Then there exists a constant $C>0$ such that
\begin{equation*}
\big\Vert \LL_{\Omega}(f_1,\dots,f_m)\big\Vert_{L^p(\bbrn)}\le C\Vert \Omega\Vert_{L^{\frac{1}{1-s}}(\mathbb{S}^{mn-1})}\prod_{j=1}^{m}\Vert f_j\Vert_{L^{p_j}(\bbrn)}
\end{equation*}
for Schwartz functions $f_1,\dots,f_m$ on $\bbrn$.
\end{customthm}
Setting $1<q=\frac{1}{1-s}<\infty$, Theorem \ref{knownbdresult} is equivalent to the statement that
\begin{equation}\label{lomelpest}
\big\Vert \LL_{\Omega}(f_1,\dots,f_m)\big\Vert_{L^p(\bbrn)}\le C\Vert \Omega\Vert_{L^{q}(\mathbb{S}^{mn-1})}\prod_{j=1}^{m}\Vert f_j\Vert_{L^{p_j}(\bbrn)} 
\end{equation} holds,
provided that $1<p_1,\dots,p_m<\infty$ and $\frac{1}{m}<p<\infty$ satisfy
\begin{equation}\label{qcondequi}
\sum_{j\in J}\frac{1}{p_j}<\frac{|J|}{q'}+\frac{1}{q} \q \text{ for any subsets $J$ of $J_m$}.
\end{equation}
We should also remark that   the estimate \eqref{lomelpest} in the bilinear setting has been recently further improved by Dosidis, Slav\'ikov\'a, and the author \cite{Do_Park_Sl_submitted} weakening the $L^q$ assumption on $\Omega$ to the requirement that $\Omega$ belongs to the Orlicz space $L(\log{L})^{\alpha}$ for some $\alpha>0$ when $1<p,p_1,p_2<\infty$, or equivalently $(\frac{1}{p_1},\frac{1}{p_2})\in \HH^2(0)$.

\hfill

In this paper we are primarily concerned with maximal multi-(sub)linear operators associated to the singular integral operator $\LL_{\Omega}$, defined by
\begin{equation*}
\LL_{\Om}^{*}\big(f_1,\dots ,f_m\big)(x) := 
\sup_{\ep>0 } \big| \LL_{\Om}^{(\ep)}\big(f_1,\dots , f_m\big)(x)\big|, \qquad x\in\bbrn 
\end{equation*}
for Schwartz functions $f_1,\dots,f_m$ on $\bbrn$.
Employing the wavelet decomposition used in the proof of initial estimates for $\LL_{\Omega}$, the $L^2\times \cdots\times L^2\to L^{\frac{2}{m}}$ boundedness result was obtained by Buri\'ankov\'a and Honz\'ik \cite{Bu_Ho2019} for bilinear maximal operators and by Grafakos, He, Honz\'ik, and the author \cite{Gr_He_Ho_Park2024} for general multilinear ones.
\begin{customthm}{B}\cite{Bu_Ho2019, Gr_He_Ho_Park2024}\label{maximalinitial}
Suppose that $\Omega$ satisfies \eqref{vanishingmtcondition} and
 \begin{equation}\label{initialcononome}
\Omega\in L^q(\mathbb{S}^{mn-1})\q \text{ for }~ \frac{2m}{m+1}<q\le \infty.
\end{equation} 
Then there exists a constant $C>0$ such that
\begin{equation}\label{earlierinitialmulti}
\big\Vert \LL_{\Omega}^*(f_1,\dots,f_m)\big\Vert_{L^{\frac{2}{m}}(\bbrn)}\le C \Vert \Omega\Vert_{L^q(\mathbb{S}^{mn-1})}\prod_{j=1}^{m}\Vert f_j\Vert_{L^{2}(\bbrn)}
\end{equation}
for Schwartz functions $f_1,\dots,f_m$ on $\bbrn$.
\end{customthm}

The main result of this paper is the following general $L^{p_1}\times \cdots\times L^{p_m}\to L^p$ estimate for $\LL_{\Omega}^*$,
which extends and improves the initial estimate in Theorem \ref{maximalinitial} to all indices $1<p_1,\dots, p_m<\infty$ and $\frac{1}{m}<p<\infty$ under the same hypothesis on $\Omega$ as in Theorem \ref{knownbdresult}. 

\begin{theorem}\label{maintheorem}
Let $0<s<1$, $1<p_1,\dots,p_m<\infty$, and $\frac{1}{m}<p<\infty$ with $\frac{1}{p}=\frac{1}{p_1}+\cdots+\frac{1}{p_m}$.
Suppose that
\begin{equation}\label{rangeofpj}
\Big(\frac{1}{p_1},\dots,\frac{1}{p_m} \Big)\in\mathcal{H}^m(s)
\end{equation}
and $\Omega\in L^{\frac{1}{1-s}}(\mathbb{S}^{mn-1})$ with  \eqref{vanishingmtcondition}.
Then there exists a constant $C>0$ such that
\begin{equation*}
 \big\|\LL_{\Om}^{*}(f_1,\dots , f_m)\big\|_{L^{p}(\mathbb{R}^n) } \le C\Vert \Omega\Vert_{L^{\frac{1}{1-s}}(\mathbb{S}^{mn-1})} \prod_{j=1}^{m}\| f_j\|_{L^{p_j}(\mathbb{R}^n)}
 \end{equation*}
for Schwartz functions $f_1,\dots,f_m$ on $\bbrn$.
\end{theorem}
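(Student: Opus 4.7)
The plan is to combine the endpoint maximal estimate of Theorem \ref{maximalinitial} with the full non-maximal $L^p$ bound of Theorem \ref{knownbdresult} and extend via multilinear interpolation across the region $\HH^m(s)$. A first reduction separates the continuous supremum over $\epsilon>0$ into a dyadic sup plus an error: writing $\epsilon\in[2^k,2^{k+1})$ gives
\begin{equation*}
 \LL_\Om^{*}(f_1,\dots,f_m)(x) \le \sup_{k\in\Z}\big|\LL_\Om^{(2^k)}(f_1,\dots,f_m)(x)\big| + \mathcal{M}_\Om(f_1,\dots,f_m)(x),
\end{equation*}
where $\mathcal{M}_\Om(f_1,\dots,f_m)(x):=\sup_{k\in\Z} 2^{-mnk}\int_{|\yyy|\sim 2^k}|\Omega(\yyy')|\prod_j|f_j(x-y_j)|\,d\yyy$. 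H\"older on the sphere bounds $\mathcal{M}_\Om$ pointwise by $\|\Omega\|_{L^{1/(1-s)}(\S^{mn-1})}$ times a multilinear Hardy--Littlewood-type maximal operator, handling the error term on all of $(0,1)^m\supset\HH^m(s)$.

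The main task is therefore to control $\LL_\Om^{**}(\fff)(x):=\sup_k|\LL_\Om^{(2^k)}(\fff)(x)|$. Fix $\phi\in C_c^\infty(\bbrn)$ with $\sum_k \phi(2^{-k}\yyy)\equiv 1$ on $\{\yyy\ne\000\}$, set $K_k:=K\cdot\phi(2^{-k}\cdot)$, and let $T_k$ denote the associated $m$-linear convolution. Since $\LL_\Om^{(2^k)}=\LL_\Om-\sum_{j<k}T_j$, Theorem \ref{knownbdresult} reduces matters to bounding $\sup_k|\sum_{j<k}T_j\fff|$, which I would dominate by the square function $(\sum_k|T_k\fff|^2)^{1/2}$ via a standard Rademacher-type reduction. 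The square function would then be estimated by revisiting the Daubechies wavelet decomposition $\Omega=\sum_\mu\lambda_\mu\omega_\mu$ on $\S^{mn-1}$ developed in \cite{Gr_He_Ho_Park2024,Gr_He_Ho_Park_JLMS}: each atom $\omega_\mu$ would yield a \emph{smooth} square-function bound with rapid decay in the wavelet scale parameter (from cancellation and Fourier decay) and a \emph{rough} bound with weaker scale decay but sharp size control in $\|\Omega\|_{L^{1/(1-s)}}$, and interpolating these two families would cover a Banach rectangle of exponents.

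To reach the full region I would invoke the convex hull property \eqref{convexhull}: since $\HH^m(s)$ is the convex hull of the rectangles $\mathscr{V}^m_l(s)$, $l\in J_m$, it suffices to prove the bound on each such rectangle. On $\mathscr{V}^m_l(s)$ the exponents $p_j$ with $j\ne l$ satisfy $p_j>1/s$, making the analysis asymmetric in the $l$-th slot and allowing the square-function scheme above to be combined with a duality argument in that slot; multilinear complex/Marcinkiewicz interpolation in the quasi-Banach range then assembles the pieces into the full estimate on $\HH^m(s)$. The principal obstacle I anticipate is twofold: first, bootstrapping Theorem \ref{maximalinitial} past its natural threshold $q>\frac{2m}{m+1}$ down to the sharp $q=\frac{1}{1-s}>1$ required here; second, handling the quasi-Banach regime $p<1$ (unavoidable when $m\ge 3$), in which the Rademacher reduction and the passage from partial sums to the square function must be carried out without recourse to linear duality and with careful bookkeeping of the decay in both the dyadic scale $k$ and the wavelet parameter $\mu$.
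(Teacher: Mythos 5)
Your high-level skeleton -- split the continuous supremum into a dyadic supremum plus a maximal error term, handle the error by a maximal inequality, use the Littlewood--Paley / Duoandikoetxea--Rubio~de~Francia decomposition together with wavelets, and then interpolate across $\mathcal{H}^m(s)$ via the convex-hull property -- does match the paper's general architecture, and your observation that the crux is pushing $q>\frac{2m}{m+1}$ down to $q>1$ is exactly the right one. But there are three places where the plan either has a genuine gap or relies on a step that is simply false.

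\textbf{First}, the reduction $\sup_{\tau}\big|\sum_{j<\tau}T_j\fff\big|\lesssim\big(\sum_k|T_k\fff|^2\big)^{1/2}$ is not correct as stated: a Rademacher/Khintchine argument controls $\big\|\sum_k\varepsilon_kT_k\fff\big\|_{L^p}$ (for one fixed choice of signs, or on average), not the supremum over partial sums, and the latter can be unbounded even when the square function is small (take $T_k\fff\equiv 1$). The paper resolves this via a \emph{Fourier-support} argument: after inserting the paraproduct decomposition, the partial sum $\sum_{\gamma<\tau}\Phi_{\mu+\gamma}^{m+1}*T_\mu^\gamma\fff$ has Fourier support in a ball of radius $\sim 2^{\mu+\tau}$, so one can write it as $\Lambda_{\mu+\tau}*\big(\text{full sum}\big)-\Lambda_{\mu+\tau}*\big(\text{tail}\big)$, control the first piece by the square function after applying a Hardy--Littlewood maximal bound, and show the tail collapses to finitely many terms by annulus disjointness. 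You need this (or a Cotlar-type inequality, which the paper uses for the $\mu\le 0$ part) before any square-function estimate can enter.

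\textbf{Second}, the mechanism you gesture at for pushing past $q>\frac{2m}{m+1}$ -- ``a smooth bound with rapid decay in the wavelet scale and a rough bound with sharp $L^{1/(1-s)}$ control, and interpolating'' -- is not enough, because interpolating the initial $L^2\times\cdots\times L^2$ estimate against an $L^\infty$-norm estimate only delivers $2^{-\delta_1\mu}\|\Omega\|_{L^\infty}$, never the $L^{1/(1-s)}$ norm. The paper's key new idea is an additional decomposition of $\Omega$ by size, $\Omega=\sum_{l\ge 0}\Omega^l$ with $\Omega^l$ supported on $\{2^{l-1}<|\Omega|\le 2^l\}$, combined with \emph{two} complementary $\mu$-dependent bounds: one with $\|\Omega^l\|_{L^1}$ and only polynomial growth $\mu^M$ (Proposition~\ref{keyproposharp}, valid for $p>1$), and one with $\|\Omega^l\|_{L^{1/(1-r)}}$ (some $r<s$) and \emph{arbitrarily slow} exponential growth $2^{\epsilon\mu}$ (Proposition~\ref{hsmainestpropo}). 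Because $\|\Omega^l\|_{L^\infty}\lesssim 2^l$ while $\|\Omega^l\|_{L^1}\lesssim 2^{-ls/(1-s)}$, averaging these against the $L^\infty$ bound both kills the growth in $l$ and retains exponential decay in $\mu$. Without this decomposition of $\Omega$ there is no route from the $L^\infty$ interpolant to the $L^{1/(1-s)}$ norm, and without the arbitrary-slowness in Proposition~\ref{hsmainestpropo} the exponents do not close.

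\textbf{Third}, ``duality in the $l$-th slot'' is unavailable on $\mathscr{V}^m_l(s)$ precisely because the target $L^p$ is quasi-Banach for $p\le 1$ (unavoidable once $m\ge 2$, since the vertices have $p_l=1$). The paper instead proves a weak-type endpoint (Claim $Y(s)$) on the faces $\mathscr{R}^m_l(s)$ where $p_l=1$, via a Calder\'on--Zygmund decomposition of $f_l$, and then reaches $\mathscr{V}^m_l(s)$ by sublinear Marcinkiewicz interpolation and finally $\mathcal{H}^m(s)$ by Lemma~\ref{interpollemma}; this is also where the induction $X(a_\nu)\Rightarrow\cdots\Rightarrow\Sigma(a_\nu)\Rightarrow X(a_{\nu+1})$ with $a_\nu\nearrow 1$ enters, which your outline omits. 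In short: the skeleton is plausible, but the sup-to-square reduction is wrong as written, the size decomposition of $\Omega$ plus the two asymmetric $\mu$-growth estimates are the missing engine, and the quasi-Banach region needs weak-type endpoints and Marcinkiewicz interpolation rather than duality.
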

We point out that Theorem \ref{maintheorem} deduces that the same initial multilinear estimate \eqref{earlierinitialmulti} holds even for $\frac{2(m-1)}{m}<q\le \frac{2m}{m+1}$, which improves Theorem \ref{maximalinitial}.\\

As is generally known (even in the linear setting), such a maximal function estimate is related to a problem of almost everywhere convergence of the associated doubly truncated singular integrals
$$
\mathcal L^{(\ep,\ep^{-1})}_{\Om}\big(f_1,\dots,f_m\big)(x):=\int_{\epsilon<|\yyy|<\epsilon^{-1}}{K(\yyy)\prod_{j=1}^{m}f_j(x-y_j)}~d\yyy
$$
as $\epsilon \searrow 0$ in the case that each $f_j$ is an $L^{p_j}$ function on $\bbrn$. Indeed, it is proved in \cite[Theorem 1.1]{Gr_He_Ho_Park2024} that  
\begin{equation}\label{ptaeconinitial}
\LL^{(\ep,\ep^{-1})}_{\Om}\big(f_1,\dots,f_m\big)(x)\to \LL_{\Omega}(f_1,\dots,f_m)(x) \q \text{ a.e. \q  as \q $\epsilon \searrow 0$}
\end{equation}
when  $f_1,\dots,f_m\in L^2(\bbrn)$ and $\Omega\in L^q(\mathbb{S}^{mn-1})$ for $\frac{2m}{m+1}<q\le \infty$, applying Theorem \ref{maximalinitial}. 
Similarly, as an application of Theorem \ref{maintheorem}, we obtain the following almost everywhere pointwise estimate.
\begin{theorem}\label{maincor}
Let $1<p_1,\dots,p_m<\infty$ and $1<q\le \infty$ with \eqref{qcondequi}.
Suppose that $\Omega\in L^q(\mathbb{S}^{mn-1})$ satisfies \eqref{vanishingmtcondition}.
Then for each $f_j\in L^{p_j}(\bbrn)$, the doubly truncated singular integral
$\LL_{\Omega}^{(\epsilon,\epsilon^{-1})}(f_1,\dots,f_m)$ converges to $\LL_{\Omega}(f_1,\dots,f_m)$ pointwise almost everywhere as $\epsilon\searrow 0$.
\end{theorem}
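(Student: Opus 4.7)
The plan is to derive the a.e.\ convergence from the maximal inequality of Theorem \ref{maintheorem} via the classical three-step scheme: (i) a maximal estimate for the doubly truncated operator, (ii) pointwise convergence on a dense subclass, and (iii) a multilinear density transfer.

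For step (i), set $s:=1-1/q\in(0,1)$ when $q<\infty$, and take any $s<1$ sufficiently close to $1$ when $q=\infty$. The hypothesis \eqref{qcondequi} is then exactly the condition $(1/p_1,\dots,1/p_m)\in\mathcal{H}^m(s)$ (in the $q=\infty$ case it reduces to $\sum_{j\in J}1/p_j<|J|$, which is automatic and, together with $\mathcal{H}^m(s)\nearrow(0,1)^m$ as $s\nearrow 1$, guarantees that such an $s$ exists). Theorem \ref{maintheorem} therefore yields
\[
\big\|\LL_\Omega^*(f_1,\dots,f_m)\big\|_{L^p(\bbrn)}\le C\|\Omega\|_{L^q(\mathbb{S}^{mn-1})}\prod_{j=1}^m\|f_j\|_{L^{p_j}(\bbrn)}.
\]
Since $\LL_\Omega^{(\epsilon,\epsilon^{-1})}=\LL_\Omega^{(\epsilon)}-\LL_\Omega^{(\epsilon^{-1})}$, the doubly truncated maximal operator
\[
\LL_\Omega^{**}(f_1,\dots,f_m)(x):=\sup_{\epsilon>0}\big|\LL_\Omega^{(\epsilon,\epsilon^{-1})}(f_1,\dots,f_m)(x)\big|
\]
is pointwise dominated by $2\LL_\Omega^*$, hence inherits the same $L^{p_1}\times\cdots\times L^{p_m}\to L^p$ boundedness.

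For step (ii), I would verify that for Schwartz $\varphi_1,\dots,\varphi_m$ one has $\LL_\Omega^{(\epsilon,\epsilon^{-1})}(\varphi_1,\dots,\varphi_m)(x)\to\LL_\Omega(\varphi_1,\dots,\varphi_m)(x)$ for every $x\in\bbrn$ as $\epsilon\searrow 0$. The inner-truncation limit is just the principal value defining $\LL_\Omega$ on a Schwartz tuple, obtained by subtracting the constant $\prod_j\varphi_j(x)$ inside the integral and using \eqref{vanishingmtcondition} together with the smoothness of each $\varphi_j$ to control the singularity at $\yyy=\000$; the outer tail $\int_{|\yyy|>\epsilon^{-1}}K(\yyy)\prod_j\varphi_j(x-y_j)\,d\yyy$ vanishes via H\"older on $\mathbb{S}^{mn-1}$ (using $\Omega\in L^q$) combined with Schwartz decay of the $\varphi_j$.

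For step (iii), pick Schwartz approximants $\varphi_j^{(k)}\to f_j$ in $L^{p_j}$, and set $G_i^{j,k}:=\varphi_i^{(k)}$ for $i<j$, $G_j^{j,k}:=f_j-\varphi_j^{(k)}$, and $G_i^{j,k}:=f_i$ for $i>j$. Multilinearity of $\LL_\Omega^{(\epsilon,\epsilon^{-1})}$ and $\LL_\Omega$ gives
\[
\LL_\Omega^{(\epsilon,\epsilon^{-1})}(f_1,\dots,f_m)-\LL_\Omega^{(\epsilon,\epsilon^{-1})}(\varphi_1^{(k)},\dots,\varphi_m^{(k)})=\sum_{j=1}^m\LL_\Omega^{(\epsilon,\epsilon^{-1})}(G_1^{j,k},\dots,G_m^{j,k})
\]
and the analogous identity for $\LL_\Omega$. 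Combining these with step (ii) and taking $\limsup_{\epsilon\searrow 0}$ produces, for every $x\in\bbrn$,
\[
\limsup_{\epsilon\searrow 0}\big|\LL_\Omega^{(\epsilon,\epsilon^{-1})}(f_1,\dots,f_m)(x)-\LL_\Omega(f_1,\dots,f_m)(x)\big|\le\sum_{j=1}^m\Big(\LL_\Omega^{**}(G_1^{j,k},\dots,G_m^{j,k})(x)+\big|\LL_\Omega(G_1^{j,k},\dots,G_m^{j,k})(x)\big|\Big).
\]
Step (i) and \eqref{lomelpest} bound the $L^p$ (quasi-)norm of the right-hand side by a constant multiple of $\sum_j\|f_j-\varphi_j^{(k)}\|_{L^{p_j}}\prod_{i\neq j}\bigl(\|f_i\|_{L^{p_i}}+\|\varphi_i^{(k)}\|_{L^{p_i}}\bigr)$, which tends to $0$ as $k\to\infty$. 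Letting $k\to\infty$, the left-hand side vanishes almost everywhere, which is the claimed a.e.\ convergence.

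The step I expect to be most delicate is (iii): because $p$ can be less than $1$, $L^p$ is only a quasi-Banach space, so passing from the $L^p$ estimate of the right-hand side to the a.e.\ vanishing of the $\limsup$ has to be executed using the subadditivity of $\|\cdot\|_{L^p}^p$, or, equivalently, via a weak-$L^p$ distributional bound combined with a Borel--Cantelli argument applied at each dyadic level. Apart from this multilinear bookkeeping, no ingredient beyond Theorem \ref{maintheorem} and Theorem \ref{knownbdresult} is required.
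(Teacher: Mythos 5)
Your three-step scheme (maximal bound, dense subclass convergence, density transfer) matches the overall structure of the paper's appendix proof, and your treatment of step (ii) and the quasi-norm issue in step (iii) are sound. There is, however, a genuine gap in step (i) that propagates into step (iii). The pointwise domination $\LL_{\Omega}^{**}\le 2\LL_{\Omega}^{*}$ only makes sense, a priori, on Schwartz tuples: for $f_j\in L^{p_j}(\bbrn)$ the singly truncated integral $\LL_{\Omega}^{(\epsilon)}(f_1,\dots,f_m)(x)=\int_{|\yyy|>\epsilon}K(\yyy)\prod f_j(x-y_j)\,d\yyy$ need not converge absolutely, so one cannot write $\LL_{\Omega}^{(\epsilon,\epsilon^{-1})}=\LL_{\Omega}^{(\epsilon)}-\LL_{\Omega}^{(\epsilon^{-1})}$ and take suprema. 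Consequently the bound $\|\LL_{\Omega}^{**}(\cdot)\|_{L^p}\lesssim\prod\|\cdot\|_{L^{p_j}}$ is established only on Schwartz tuples by your step (i), yet in step (iii) you apply precisely this bound to the tuples $G^{j,k}$, whose $j$-th and trailing slots are merely $L^{p_i}$ functions. The logic is therefore circular: the $L^p$ estimate for $\LL_{\Omega}^{**}$ on general tuples is exactly what has to be established before the density transfer can run.

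The paper closes this gap with an ingredient you do not mention: Lemma \ref{maxomeest}, the $L^{p_1}\times\cdots\times L^{p_m}\to L^p$ boundedness of the rough maximal function $\mathcal{M}_{\Omega}$. Via the elementary bound \eqref{abininest}, the restricted maximal operator $\LL_{\Omega}^{*,\epsilon_0}(f_1,\dots,f_m)=\sup_{\epsilon\ge\epsilon_0}|\LL_{\Omega}^{(\epsilon,\epsilon^{-1})}(f_1,\dots,f_m)|$ is finite a.e.\ and is dominated by $(\epsilon_0)^{-2mn}\mathcal{M}_{\Omega}(f_1,\dots,f_m)$. This gives a way to control the non-Schwartz error tuples $G^{j,k}$ (through $\mathcal{M}_{\Omega}$, not through $\LL_{\Omega}^{**}$ itself), yielding an $L^p$ bound for $\LL_{\Omega}^{*,\epsilon_0}$ that is uniform in $\epsilon_0$ after sending $k\to\infty$; Fatou's lemma in $\epsilon_0$ then gives \eqref{lpjftest}, the $L^p$ bound for $\LL_{\Omega}^{**}$ on general $L^{p_j}$ tuples. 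Only at that point is the density transfer legitimate. Your closing remark that ``no ingredient beyond Theorem \ref{maintheorem} and Theorem \ref{knownbdresult} is required'' is therefore not accurate: Lemma \ref{maxomeest} is used essentially, both to make $\LL_{\Omega}^{(\epsilon,\epsilon^{-1})}(f_1,\dots,f_m)$ well-defined a.e.\ and to run the intermediary $\epsilon_0$-truncation argument. (A minor point in your favor: your variant of step (iii), which expands $\LL_{\Omega}$ multilinearly as well and invokes \eqref{lomelpest}, is a legitimate simplification of the paper's subsequence trick for the third term, once the maximal bound on general tuples is in hand.)
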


As a consequence of Theorem \ref{maincor}, the multilinear singular integral $\LL_{\Omega}(f_1,\dots,f_m)$ is well-defined almost everywhere when $f_j\in L^{p_j}(\bbrn)$, $j=1,\dots,m$.
Theorem \ref{maincor} can be proved by replacing Theorem \ref{maximalinitial} with Theorem \ref{maintheorem} and then simply mimicking the proof of \eqref{ptaeconinitial} in \cite{Gr_He_Ho_Park2024}. For the sake of completeness, we include the proof in the appendix.

\hfill

In order to prove Theorem \ref{maintheorem}, we apply a dyadic decomposition introduced by Duoandikoetxea and Rubio de Francia \cite{Du_Ru1986}, which has already been employed very essentially in many earlier papers \cite{Bu_Ho2019, Do_Park_Sl_submitted, Do_Sl2024, Gr_He_Ho2018, Gr_He_Ho_Park2023, Gr_He_Ho_Park2024, Gr_He_Ho_Park_JLMS, Gr_He_Sl2020, He_Park2023},
 and utilize the same reduction step as in the proof of Theorem \ref{maximalinitial} in \cite{Gr_He_Ho_Park2024}.
More precisely, we decompose the kernel $K$ in \eqref{kernelrep} as 
$$K=\sum_{\mu\in\bbz}\sum_{\gamma\in \bbz}K_{\mu}^{\gamma}$$
where $K_{\mu}^{\gamma}=\Psi_{\mu+\gamma}\ast \big(K \cdot\wh{\Psi_{-\gamma}}\big)$ and $\Psi_k$ is a Littlewood-Paley function on $(\bbrn)^m$, which will be officially defined in Section \ref{preliminarysection}, whose Fourier transform is supported in an annulus of size $2^{k}$. Then the maximal function $\LL_{\Omega}^*(f_1,\dots,f_m)$ can be estimated as
$$\LL_{\Omega}^*(f_1,\dots,f_m)\le \mathcal{M}_{\Omega}(f_1,\dots,f_m)+\LL_{\Omega}^{\sharp}(f_1,\dots,f_m)$$
where 
\begin{equation*}
\mathcal M_\Om\big(f_1,\dots, f_m\big)(x)=
\sup_{R>0}  \f{1}{R^{mn} } \int_{|\yyy|\le R}   |\Om (\yyy' ) | 
\prod_{j=1}^m \big|  f_j(x-y_j) \big|  ~ d\yyy
\end{equation*}
and
\begin{equation}\label{e000sharp}
\LL_{\Om}^{\sharp}\big(f_1,\dots,f_m\big)(x) := 
\sup_{\tau\in \mathbb Z} \Big| \sum_{\gamma<\tau} \sum_{\mu\in\bbz}
T_{K_{\mu}^{\gamma}}\big(f_1,\dots,f_m\big)(x)\Big|.
\end{equation}
A boundedness result for $\mathcal{M}_{\Omega}$, which is required for the proof of  Theorem \ref{maintheorem}, has already been shown in \cite{Gr_He_Ho_Park2024}, and thus we only need to consider the remaining operator $\LL_{\Omega}^{\sharp}$.
We also notice that when the sum over $\mu\in\bbz$ in \eqref{e000sharp} changes to the sum over $\mu\le 0$, the corresponding operator satisfies the $L^{p_1}\times \cdots \times L^{p_m}\to L^p$ boundedness with a constant $C_q \Vert\Omega\Vert_{L^q(\mathbb{S}^{mn-1})}$ for any $1<q<\infty$ and $1<p_1,\dots,p_m\le \infty$ with $\frac{1}{p}=\frac{1}{p_1}+\cdots+\frac{1}{p_m}$. This was verified in \cite[Proposition 4.1]{Gr_He_Ho_Park2024}, using  multilinear version of Cotlar's inequality in \cite[Theorem 1]{GT-Indiana}, together with the fact that
$\sum_{\gamma\in\bbz}\sum_{\mu\le 0}K_{\mu}^{\gamma}$
is an $m$-linear Calder\'on-Zygmund kernel with constant $C_q\Vert\Omega\Vert_{L^q(\mathbb{S}^{mn-1})}$, thanks to the estimate of Duoandikoetxea and Rubio de Francia \cite{Du_Ru1986}; see \eqref{kmuhatest} below. Therefore, it suffices to deal with the case $\mu> 0$ in \eqref{e000sharp}, which is clearly bounded by
$$\sum_{\mu> 0}\LL_{\Omega,\mu}^{\sharp}(f_1,\dots,f_m)$$
where
$$
  \LL_{\Om,\mu}^{\sharp}\big(f_1,\dots,f_m\big)(x) 
  := \sup_{\tau\in \mathbb Z} \Big|   \sum_{\gamma<\tau} T_{K_{\mu}^{\gamma}}\big(f_1,\dots,f_m\big)  (x)  \Big| .
 $$
We will actually prove that there exists $\epsilon_0>0$ such that
 \begin{equation}\label{mainkeyest22_1}
\big\|  \LL_{\Om,\mu}^{\sharp}\big(f_1,\dots,f_m\big) \big\|_{L^{p}(\bbrn) } \lesssim_{\epsilon_0}  2^{-\epsilon_0 \mu}\| \Om\|_{L^{\frac{1}{1-s}}(\mathbb{S}^{mn-1})}\prod_{j=1}^{m}\Vert f_j\Vert_{L^{p_j}(\bbrn)}, \q \mu> 0
\end{equation}
when \eqref{rangeofpj} holds.
We remark that the structure of the proof is almost same as that of Theorem \ref{maximalinitial} in \cite{Gr_He_Ho_Park2024}
where one of the key estimates is
 \begin{equation}\label{mainkeyestinitial_1}
\big\|  \LL_{\Om,\mu}^{\sharp}\big(f_1,\dots,f_m\big) \big\|_{L^{\frac{2}{m}}(\bbrn) } \lesssim  2^{-\delta_0 \mu}\| \Om\|_{L^{q}(\mathbb{S}^{mn-1})}\prod_{j=1}^{m}\Vert f_j\Vert_{L^{2}(\bbrn)}, \q \mu> 0
\end{equation}
for  some $\delta_0>0$, provided that $q>\frac{2m}{m+1}$.
Therefore the main contribution of this work is to improve and extend the estimate \eqref{mainkeyestinitial_1} to \eqref{mainkeyest22_1}.
This will be achieved by establishing Propositions \ref{keyproposharp} and \ref{hsmainestpropo} in which analogous (but a slightly weaker) multilinear estimates are provided with arbitrary slow exponential growths in $\mu$, but will be finally improved to \eqref{mainkeyest22_1} by applying a decomposition of $\Omega$ based on its size; see \eqref{omegadecomp} below. 
It should be also mentioned that  we follow the terminology in \cite{Gr_He_Ho_Park2024} for the sake of unity as some of the results verified there will be used in the proof of Theorem \ref{maintheorem}.

\hfill

{\bf Organization.}
Section \ref{preliminarysection} contains some preliminary materials including several maximal inequalities, shifted operators, multilinear paraproducts, and multi-sublinear interpolation theory.
We will prove Theorem \ref{maintheorem} in Section \ref{proofmainresult}, presenting two key propositions, namely Propositions \ref{keyproposharp} and \ref{hsmainestpropo}. The proof of the two propositions will be given in turn in the next two sections.\\

\medskip
\noindent {\bf Acknowledgment:} The author would like to thank the anonymous referees for 
 their careful reading and valuable comments, which made this paper more readable. 
The author also thanks Stefanos Lappas for his useful comments.

\section{Preliminaries} \label{preliminarysection}

\subsection{Maximal inequalities}\label{maximalines}
We first recall some fundamental maximal inequalities.
For a locally integrable function $f$ defined on $\bbrn$, let 
$$
\mathcal{M}f(x):=\sup_{Q:x\in Q}\frac{1}{|Q|}\int_Q{|f(y)|}dy
$$
 be the Hardy-Littlewood maximal function of $f$ where the supremum is taken over all cubes in $\bbrn$ containing $x$, and let $\mathcal{M}_rf(x):=\big( \mathcal{M}\big(|f|^r\big)(x) \big)^{\frac{1}{r}}$ for $0<r<\infty$. Then the maximal operator $\mathcal{M}_r$ is bounded in $L^p$ when $0<r<p$ and
Fefferman and Stein \cite{Fe_St1971}  obtained a vector-valued counterpart;  for $0<p<\infty$, $0<q\leq \infty$, and $0<r<\min{(p,q)}$ one has
\begin{equation}\label{hlmax}
\big\Vert \big\{\mathcal{M}_rf_k\big\}_{k\in\mathbb{Z}}\big\Vert_{L^p(\ell^q)}\lesssim \big\Vert \{ f_k\}_{k\in\mathbb{Z}}\big\Vert_{L^p(\ell^q)}.
\end{equation}
Clearly, (\ref{hlmax}) also holds when $p=q=\infty$.

Given $k\in \Z$ and $\sigma>0$, we also introduce Peetre's maximal function in \cite{Pe1975}
\begin{equation*}
\mathfrak{M}_{\sigma,2^k}f(x):=\sup_{y\in\bbrn}{\frac{|f(x-y)|}{(1+2^k|y|)^{\sigma}}}.
\end{equation*}
 For $A>0$, let $\mathcal{E}(A)$ denote the space of all distributions whose Fourier transform is supported in $\big\{\xi\in\bbrn:|\xi|\leq 2A\big\}$.
 It turned out that
 \begin{eqnarray}\label{maximalbound}
\mathfrak{M}_{\frac{n}{r},2^k}f(x)\lesssim_{r,A} \mathcal{M}_rf(x), 
\end{eqnarray}  provided that $f\in\mathcal{E}(A2^k)$ for $A>0$.
A combination of~\eqref{maximalbound} and \eqref{hlmax} yields that 
 for $0<p<\infty$ and $0<q\leq \infty$, we have 
\begin{equation}\label{peetremax}
\big\Vert  \big\{\mathfrak{M}_{\sigma,2^k}f_k \big\}_{k\in\bbz} \big\Vert_{L^p(\ell^q)} \lesssim_{A,p,q}  \big\Vert \big\{f_k\big\}_{k\in\bbz}    \big\Vert_{L^p(\ell^q)} \quad \text{for}~\sigma>\frac{n}{\min{(p,q)}}
\end{equation}  if $f_k\in\mathcal{E}(A2^k)$.
Clearly, the above inequality also holds for $p=q=\infty$.\\

\subsection{Shifted operators}
Let $\phi$ and $\psi$ stand for Schwartz functions on $\bbrn$ such that
$$\wh{\phi}(0)=1,\q \supp{(\wh{\phi})}\subset \{\xi\in\bbrn: |\xi|\lesssim 1\},$$
\begin{equation*}
\supp{(\wh{\psi})}\subset \{\xi\in\bbrn: |\xi|\sim 1\}, \q \text{ and }\q \sum_{k\in\bbz}\wh{\psi_k}(\xi)=1,~ \xi\not= 0
\end{equation*}
where we set $\phi_k:=2^{kn}\phi(2^k\cdot)$ and $\psi_k:=2^{kn}\psi(2^k\cdot)$ for $k\in\bbz$.
It is easy to verify that for each $k\in\bbz$
\begin{equation}\label{phikfptest}
\big|\phi_k\ast f(x)\big|, \big|\psi_k\ast f(x)\big| \lesssim \mathcal{M}f(x) \quad \text{uniformly in } ~k
\end{equation}
and for any $\sigma>0$
\begin{equation}\label{phikfpetreept}
\big|\phi_k\ast f(x)\big|, \big| \psi_k\ast f(x)\big| \lesssim_{\sigma} \mathfrak{M}_{\sigma,2^k}f(x) \quad \text{uniformly in } ~k.
\end{equation}
Then we have the following characterizations of the Lebesgue space;
\begin{equation}\label{lpequivs}
\Vert f\Vert_{L^p(\bbrn)}\sim  \Big\Vert \sup_{k\in\bbz} \big| \phi_k\ast f \big|\Big\Vert_{L^p(\bbrn)}\sim \bigg\Vert \bigg( \sum_{k\in\bbz}\big| \psi_k\ast f \big|^2\bigg)^{\frac{1}{2}}\bigg\Vert_{L^p(\bbrn)} \q \text{ for }~1<p<\infty.
\end{equation}
The first equivalence follows from the Lebesgue differentiation theorem and the $L^p$ boundedness of $\mathcal{M}$ together with \eqref{phikfptest}. The second one is known as Littlewood-Paley theory.
The second equivalence of \eqref{lpequivs}, the pointwise estimate \eqref{phikfptest}, and the maximal inequality \eqref{hlmax} deduce the following estimate,
 which is very useful to estimate sum over $k\in\bbz$ of functions with Fourier support in an annulus of size $2^k$.
If $1<p<\infty$ and  each $f_k\in \mathscr{S}'(\bbrn)$, $k\in\bbz$, satisfies
\begin{equation}\label{fkfrsup}
\supp(\wh{f_k})\subset \{\xi\in\bbrn: C^{-1}2^k\le |\xi|\le C 2^k\}
\end{equation}
for some $C>1$, then 
we have
\begin{equation}\label{ltchaest}
\bigg\Vert \sum_{k\in\bbz}f_k\bigg\Vert_{L^p(\bbrn)}\lesssim_C \bigg\Vert \Big( \sum_{k\in\bbz} \big| f_k\big|^2\Big)^{\frac{1}{2}}\bigg\Vert_{L^p(\bbrn)}.
\end{equation}
Indeed, the left-hand side is equivalent to
\begin{align*}
 \bigg\Vert \bigg(\sum_{l\in\bbz}\Big| \sum_{k\in\bbz}\psi_l\ast f_k \Big|^2 \bigg)^{\frac{1}{2}}\bigg\Vert_{L^p(\bbrn)}&=\bigg\Vert \bigg(\sum_{l\in\bbz}\Big| \sum_{k=-B}^{B}\psi_l\ast f_{k+l} \Big|^2 \bigg)^{\frac{1}{2}}\bigg\Vert_{L^p(\bbrn)}\\
 &\lesssim \sum_{k=-B}^{B}\bigg\Vert \bigg(\sum_{l\in\bbz}\big| \mathcal{M}f_{k+l}\big|^2\bigg)^{\frac{1}{2}}\bigg\Vert_{L^p(\bbrn)}\\
&\le \sum_{k=-B}^{B}\bigg\Vert \Big(\sum_{l\in\bbz}\big| f_{k+l}\big|^2\Big)^{\frac{1}{2}}\bigg\Vert_{L^p(\bbrn)}\sim_B \bigg\Vert \Big(\sum_{k\in\bbz}\big| f_{k}\big|^2\Big)^{\frac{1}{2}}\bigg\Vert_{L^p(\bbrn)}
\end{align*}
for some nonnegative integer $B$, depending on $C$ in \eqref{fkfrsup}.

For $k\in\bbz$ and $y\in\bbrn$, we now define two shifted operators
$$(\psi_k)^y:=\psi_k(\cdot-2^{-k}y)=2^{kn}\psi(2^k\cdot-y)$$
and
$$(\phi_k)^y:=\phi_k(\cdot-2^{-k}y)=2^{kn}\phi(2^k\cdot-y).$$
Then  one direction of the two equivalences \eqref{lpequivs} can be generalized as follows.
\begin{customlemma}{C}\cite[Theorem 1.5, Corollary 1.7]{Park_submitted}\label{shiftsquare}
Let $1<p<\infty$ and $y\in\bbrn$.
Then we have
$$\bigg\Vert \sup_{k\in\bbz}\big| (\phi_k)^y\ast f\big|\bigg\Vert_{L^p(\bbrn)}\lesssim \big( \ln{(e+|y|)}\big)^{\frac{1}{p}}\Vert f\Vert_{L^p(\bbrn)}$$
and
\begin{equation*}
\bigg\Vert \Big( \sum_{k\in\bbz}\big|(\psi_k)^y\ast f\big|^2 \Big)^{\frac{1}{2}}\bigg\Vert_{L^p(\bbrn)}\lesssim \big(\log{(e+|y|)}\big)^{|\frac{1}{p}-\frac{1}{2}|}\Vert f \Vert_{L^p(\bbrn)}
\end{equation*}
where the constants in the inequalities do not depend on $y$.
\end{customlemma}
Weaker versions of such inequalities appeared in \cite[Theorems 4.5, 4.6]{Mu_Sc2013} for one-dimensional case and in \cite[Proposition 7.5.1]{MFA} and \cite[Corollary 1]{Gr_Oh2014} for higer-dimensional ones.
A different proof of the shifted square function estimate is given in \cite{Do_Park_Sl_submitted} as well.\\

\subsection{Multilinear paraproducts}
We now consider a multilinear paraproduct,  which is required in the proof of  Proposition \ref{keyproposharp}.
Let $\Psi$ be a Schwartz function on $(\bbrn)^m$ whose Fourier transform is supported in the annulus $\{\xxxi\in (\bbrn)^m: \frac{1}{2}\le |\xxxi|\le 2\}$ and satisfies
$\sum_{k\in\bbz}\wh{\Psi_k}(\xxxi)=1$ for $\xxxi\not= \000$ where $\wh{\Psi_k}(\xxxi):=\wh{\Psi}(2^{-k} \xxxi )$.
\begin{customlemma}{D}\cite[Lemma 4.1]{LHHLPPY} \label{yyvt}	
The term
$$\sum_{k \in \mathbb{Z}} \sum_{k_1, k_2, \cdots, k_m \in \mathbb{Z}}  \widehat{\Psi_k}(\xxi) \widehat{\psi_{k_1}}(\xi_1)\widehat{\psi_{k_2}}(\xi_2) \cdots \widehat{\psi_{k_{m}}}(\xi_{m})$$ 
can be written as a finite sum of form
\[\sum_{k \in \mathbb{Z}}\widehat{\Psi_k}(\xxi) \widehat{\Phi^1_k}(\xi_1)\widehat{\Phi^2_k}(\xi_2)\cdots\widehat{\Phi^m_k}(\xi_m) \widehat{\Phi^{m+1}_k}(-\xi_1-\cdots-\xi_m),\]
where $\xxi=(\xi_1, \xi_2, \cdots, \xi_m)\in (\bbrn)^m$, and $\widehat{\Phi^1}$, $\widehat{\Phi^2}$, $\cdots$, $\widehat{\Phi^{m+1}}$ are compactly supported radial smooth functions and at least two of them are compactly supported away from the origin, and $\wh{\Phi}_{k}^{j}:=\wh{\Phi}^j(2^{-k}\cdot)$ for $1\le j\le m+1$.
\end{customlemma}
Such a decomposition has already been used very effectively in \cite{Do_Sl2024, Gr_Park2021, Gr_Park2022,  LHHLPPY, Lee_Park_accepted, Mu_Pi_Ta_Th2004, Mu_Sc2013, Park_IUMJ}, where it reduces various multilinear operator problems into simpler forms, performing an analogous role to the Littlewood-Paley decomposition technique in the linear case.\\

\subsection{Interpolation theory for multi-sublinear operators}

We end this section by presenting a multi-sublinear version of the Marcinkiewicz interpolation theorem, which is a straightforward corollary of \cite[Theorem 1.1]{Gr_Li_Lu_Zh2012}.
\begin{customlemma}{E}\cite{ Gr_Li_Lu_Zh2012}\label{interpollemma}
Let $0<p_{j}^{\ii}\le \infty$ for each $j\in \{1,\dots,m\}$ and  $\ii=0,1,\dots, m$, and $0<p^{\ii}\le \infty$ satisfy $\frac{1}{p^{\ii}}=\frac{1}{p^{\ii}_{1}}+\dots+\frac{1}{p^{\ii}_{m}}$ for $\ii=0,1,\dots, m$.
Suppose that $T$ is an $m$-sublinear operator having the mapping properties
\begin{equation*}
\big\Vert T(f_1,\dots,f_m) \big\Vert_{ L^{p^{\ii},\infty}(\bbrn)}\le M_{\ii}\prod_{j=1}^{m}\Vert f_j\Vert_{L^{p_{j}^{\ii}}(\bbrn)}, \quad \ii=0,1,\dots, m
\end{equation*} 
for Schwartz functions $f_1,\dots, f_m$ on $\bbrn$.
Given  $0<\theta_{\ii}<1$ with $\sum_{\ii=0}^m\theta_{\ii}=1$,  set 
\begin{equation*}
\frac 1{ p_j}= \sum_{\ii=0}^m \frac{ \theta_{\ii}}{p_j^\ii}  , \q j\in J_m\qquad \text{ and}\qquad
\frac 1{p}=\sum_{\ii=0}^m  \frac{ \theta_{\ii}}{p^\ii }   .
\end{equation*} 
Then  we have
\begin{equation*}
\big\Vert T(f_1,\dots,f_m)\big\Vert_{L^{p,\infty}(\bbrn)}\lesssim 
M_0^{ \theta_0}\cdots M_m^{\theta_m} \prod_{j=1}^{m}\Vert f_j\Vert_{L^{p_j}(\bbrn)}
\end{equation*}
for Schwartz functions $f_1,\dots, f_m$ on $\bbrn$.
Moreover, if the  points $(\frac{1}{p_1^\ii},\dots , \frac{1}{p_m^\ii})$, $0\le \ii\le  m$, form a non trivial open simplex in $\mathbb R^m$, then 
\begin{equation*}
\big\Vert T(f_1,\dots,f_m)\big\Vert_{L^{p}(\bbrn)}\lesssim M_0^{ \theta_0}\cdots M_m^{\theta_m}\prod_{j=1}^{m}\Vert f_j\Vert_{L^{p_j}(\bbrn)} .
\end{equation*} 
\end{customlemma}

\hfill

\section{Proof of Theorem \ref{maintheorem}}\label{proofmainresult}

Let $\Psi$ and $\Psi_k$ be the Schwartz functions on $(\bbrn)^m$, introduced in Section \ref{preliminarysection}.
For each $\gamma,\mu \in\bbz$,  we define
 $$ K^{\gamma}(\yyy):=\wh{\Psi}(2^{\gamma}\yyy)K(\yyy)~ \text{ and }~ K_{\mu}^{\gamma}(\yyy):=\Psi_{{\mu}+\gamma}\ast K^{\gamma}(\yyy), \quad \yyy\in (\bbrn)^m.$$
 Then $K^\gamma(\yyy)=2^{\ga mn} K^0(2^\gamma \yyy)$ and this deduces
 \begin{equation}\label{kernelcharacter}
K_{\mu}^{\gamma}(\yyy)=2^{\gamma mn}\big(\Psi_{{\mu}}\ast K^{0}\big)(2^\gamma \yyy)=2^{\gamma mn}K_{\mu}^{0}(2^{\gamma}\yyy),
\end{equation}
or equivalently,
$$
\wh{K^\gamma_\mu}(\xxxi)= \wh{\Psi}(2^{-(\mu+\ga)}\xxxi)\wh{K^0}(2^{-\ga}\xxxi)=\wh{K^0_\mu}(2^{-\ga} \xxxi).
$$
 The associated operator $T_{K_{\mu}^{\gamma}}$ is defined as
$$T_{K_{\mu}^{\gamma}}\big(f_1,\dots,f_m\big)(x):=\int_{(\bbrn)^m}K_{\mu}^{\gamma}(\yyy)\prod_{j=1}^{m}f_j(x-y_j)\; d\yyy$$
so that
$$\LL_{\Omega}\big(f_1,\dots,f_m\big)=\sum_{\mu\in\bbz}\sum_{\gamma\in\bbz} T_{K_{\mu}^{\gamma}}\big(f_1,\dots,f_m\big).$$
Duoandikoetxea and Rubio de Francia \cite{Du_Ru1986} proved that if $1<q<\infty$ and $0<\delta<\frac{1}{q'}$, then 
 \begin{equation}\label{khat0pakh0est}
 \begin{aligned}
 \big| \wh{K^0}(\xxxi)\big|&\lesssim \Vert \Omega\Vert_{L^q(\mathbb{S}^{mn-1})}\min\big\{|\xxxi|,|\xxxi|^{-\delta} \big\}\\
  \big| \partial^{\alpha}\wh{K^0}(\xxxi)\big|&\lesssim \Vert \Omega\Vert_{L^q(\mathbb{S}^{mn-1})}\min\big\{1,|\xxxi|^{-\delta} \big\}, \qq \alpha\not= \vec{0}
 \end{aligned}
 \end{equation}
 and accordingly,
  \begin{align*}
 \Big| \sum_{\gamma\in\bbz}\wh{K_{\mu}^{\gamma}}(\xxxi)\Big|&\lesssim \Vert \Omega\Vert_{L^q(\mathbb{S}^{mn-1})}\min\big\{2^{\mu},2^{-\delta \mu} \big\}\\
  \Big| \sum_{\gamma\in\bbz}\partial^{\alpha}\wh{K_{\mu}^{\gamma}}(\xxxi)\Big|&\lesssim \Vert \Omega\Vert_{L^q(\mathbb{S}^{mn-1})}\min\big\{2^{\mu |\alpha|},2^{\mu(mn-\delta)} \big\}, \qq 1\le |\alpha|\le mn.
 \end{align*}
Finally, we have
 \begin{equation}\label{kmuhatest}
  \Big| \sum_{\gamma\in\bbz}\partial^{\alpha}\wh{K_{\mu}^{\gamma}}(\xxxi)\Big|\lesssim \Vert \Omega\Vert_{L^q(\mathbb{S}^{mn-1})}
2^{(1-\delta){\mu}}, \q  {\mu}\le 0
 \end{equation}
 for all multi-indices $\alpha$ with $|\alpha|\le mn$.
The above inequalities play a key role in establishing the boundedness of $\LL_{\Omega}$ in Theorem \ref{knownbdresult}. More precisely,  a multilinear Mihlin's multiplier theory in \cite{Co_Me1978_multi, Gr_To2002}, together with the second estimate in \eqref{kmuhatest}, implies 
\begin{align*}
&\bigg\Vert \sum_{\mu\le 0}\sum_{\gamma\in\bbz}T_{K_{\mu}^{\gamma}}\big(f_1,\dots,f_m \big) \bigg\Vert_{L^p(\bbrn)}\\
&\le \bigg( \sum_{\mu\le 0}\Big\Vert \sum_{\gamma\in\bbz}T_{K_{\mu}^{\gamma}}\big(f_1,\dots,f_m \big) \Big\Vert_{L^p(\bbrn)}^{\min{(1,p)}} \bigg)^{\frac{1}{\min{(1,p)}}}\\
&\lesssim \bigg( \sum_{\mu\le 0}\Big(2^{(1-\delta)\mu }\Vert \Omega\Vert_{L^q(\mathbb{S}^{mn-1})}\prod_{j=1}^{m}\Vert f_j\Vert_{L^{p_j}(\bbrn)}\Big)^{\min{(1,p)}}\bigg)^{\frac{1}{\min{(1,p)}}}\\
&\lesssim   \Vert \Omega\Vert_{L^q(\mathbb{S}^{mn-1})}\prod_{j=1}^{m}\Vert f_j\Vert_{L^{p_j}(\bbrn)}.
\end{align*}
When $\mu> 0$,  a wavelet decomposition method with the estimate \eqref{khat0pakh0est} yields that
\begin{equation}\label{initialestimate}
\Big\Vert \sum_{\gamma\in\bbz}T_{K_{\mu}^{\gamma}}\big(f_1,\dots,f_m \big) \Big\Vert_{L^{\frac{2}{m}}(\bbrn)}\lesssim 2^{-\epsilon_0\mu}\Vert \Omega\Vert_{L^{q}(\mathbb{S}^{mn-1})}\prod_{j=1}^{m}\Vert f_j\Vert_{L^{2}(\bbrn)}
\end{equation}
for some $\epsilon_0>0$ and any $q>\frac{2m}{m+1}$. Later, the estimate \eqref{initialestimate} has been improved and extended to general $1<p_1,\dots,p_m<\infty$ through multilinear interpolation methods. We refer to \cite{Do_Sl2024,Gr_He_Ho_Park_JLMS} for more details.
This is also a central idea in the proof of Theorem \ref{maximalinitial} and we will carry out similar arguments.\\

\subsection{Reduction}
Let $1<q<\infty$.
We recall the maximal  operators $\mathcal{M}_{\Omega}$ and $\LL_{\Omega}^{\sharp}$ are given by
\begin{equation*}
\mathcal M_\Om(f_1,\dots, f_m)(x)=
\sup_{R>0}  \f{1}{R^{mn} } \idotsint\limits_{|\yyy|\le R}   |\Om (\yyy' ) | 
\prod_{j=1}^m \big|  f_j(x-y_j) \big|  ~ d\yyy
\end{equation*}
and
\begin{equation*}
\LL_{\Om}^{\sharp}\big(f_1,\dots,f_m\big)(x) = 
\sup_{\tau\in \mathbb Z} \Big| \sum_{\gamma<\tau} \sum_{\mu\in\bbz}
T_{K_{\mu}^{\gamma}}\big(f_1,\dots,f_m\big)(x)\Big|.
\end{equation*}
As mentioned in Section \ref{introsec}, it is known in \cite{Gr_He_Ho_Park2024} that
\begin{equation*}
\LL_{\Om}^*\big(f_1,\dots,f_m\big)
\le \mathcal{M}_{\Omega}\big(f_1,\dots,f_m\big)(x)+  \LL_{\Om}^{\sharp}\big(f_1,\dots,f_m\big).
\end{equation*}
The boundedness of the first maximal function $\mathcal{M}_{\Omega}\big(f_1,\dots,f_m\big)(x)$ can be treated by the following lemma.
\begin{customlemma}{F}\cite{Gr_He_Ho_Park2024}\label{maxomeest}
Let $1<p_1,\dots , p_m<\infty$ and $\frac{1}{m}<p<\infty$ with $\frac{1}{p}=\frac{1}{p_1}+\cdots +\frac{1}{p_m}$.
Suppose that $1<q\le \infty$, $\f 1p< \f{1}{q}+\f{m}{q' }$, and  $\Om \in L^q(\mathbb S^{mn-1})$.
Given $f_j\in L^{p_j}(\bbrn)$, there exists a measure zero set $E$ such that for $x\in \bbrn\setminus E$
$$\int_{|\yyy|\le R}   |\Om (\yyy' ) | 
\prod_{j=1}^m \big|  f_j(x-y_j) \big|  ~ d\yyy<\infty$$
for all $R>0$. In this case, 
$$\text{the maximal function $\mathcal M_{\Om}(f_1,\dots, f_m)$ is well-defined on $\bbrn\setminus E$} $$
and
\begin{equation*}
\big\|\mathcal M_{\Om}(f_1,\dots, f_m)\big\|_{L^{p}(\bbrn)}\lesssim_q \|\Om\|_{L^q(\mathbb S^{mn-1})}
\prod_{j=1}^{m}\|f_j\|_{L^{p_j}(\bbrn)}
\end{equation*}
for functions $f_j\in L^{p_j}(\bbrn)$.
\end{customlemma}
Note that the condition $$\frac{1}{p}<\frac{1}{q}+\frac{m}{q'}$$
is equivalent to
$$\Big(\frac{1}{p_1},\dots,\frac{1}{p_m}\Big)\in \HH^m_{J_m}\Big(\frac{1}{q'}\Big),$$
and thus Lemma~\ref{maxomeest} yields
$$\big\|\mathcal M_{\Om}(f_1,\dots, f_m)\big\|_{L^{p}(\bbrn)}\lesssim_s \|\Om\|_{L^{\frac{1}{1-s}}(\mathbb S^{mn-1})}
\prod_{j=1}^{m}\|f_j\|_{L^{p_j}(\bbrn)}
$$ provided that
$$\Big(\frac{1}{p_1},\dots,\frac{1}{p_m}\Big)\in\HH^m(s).$$
Therefore, it remains to establish the boundedness of $\LL_{\Om}^{\sharp}$. 
For this one, we write
\begin{equation*}
  \LL_{\Om}^{\sharp}\big(f_1,\dots,f_m\big)  
  \le  \sup_{\tau \in \mathbb Z} \Big| 
  {  \sum_{ \gamma<\tau} \sum_{\mu\le 0} T_{K_{\mu}^{\gamma}}\big(f_1,\dots,f_m\big)}
  \Big|  + \sum_{\mu> 0} 
  \LL_{\Om,\mu}^{\sharp}\big(f_1,\dots,f_m\big) 
  \end{equation*}
where we recall
$$
  \LL_{\Om,\mu}^{\sharp}\big(f_1,\dots,f_m\big)(x) 
  = \sup_{\tau\in \mathbb Z} \Big|   \sum_{\gamma<\tau} T_{K_{\mu}^{\gamma}}\big(f_1,\dots,f_m\big)  (x)  \Big| .
  $$
  In addition, it has been already verified in \cite[Proposition 4.1]{Gr_He_Ho_Park2024} that
  \begin{equation*}
  \bigg\Vert\sup_{\tau \in \mathbb Z} \Big| 
  {  \sum_{ \gamma<\tau} \sum_{\mu \le 0} T_{K_{\mu}^{\gamma}}\big(f_1,\dots,f_m\big)}
  \Big|  \bigg\Vert_{L^p(\bbrn)}\lesssim_q \Vert \Omega\Vert_{L^q(\mathbb{S}^{mn-1})}\prod_{j=1}^{m}\Vert f_j\Vert_{L^{p_j}(\bbrn)}.
  \end{equation*}
Consequently, matters reduce to
 \begin{equation}\label{mainkeyest}
\bigg\|   \sum_{\mu> 0} \LL_{\Om,\mu}^{\sharp}\big(f_1,\dots,f_m\big) \bigg\|_{L^{p}(\bbrn) } \lesssim_{s}  \| \Om\|_{L^{\frac{1}{1-s}}(\mathbb{S}^{mn-1})}\prod_{j=1}^{m}\Vert f_j\Vert_{L^{p_j}(\bbrn)}.
\end{equation}
We will actually prove that there exists $\epsilon_0>0$ such that
 \begin{equation}\label{mainkeyest22}
\big\|  \LL_{\Om,\mu}^{\sharp}\big(f_1,\dots,f_m\big) \big\|_{L^{p}(\bbrn) } \lesssim_{s,\epsilon_0}  2^{-\epsilon_0 \mu}\| \Om\|_{L^{\frac{1}{1-s}}(\mathbb{S}^{mn-1})}\prod_{j=1}^{m}\Vert f_j\Vert_{L^{p_j}(\bbrn)}, \q \mu> 0,
\end{equation}
which finally deduces \eqref{mainkeyest}.\\

\subsection{Proof of \eqref{mainkeyest22}}
It is known in \cite{Gr_He_Ho_Park2024} that for $\frac{2m}{m+1}<q\le \infty$, there exists $\delta_0>0$ such that
\begin{equation}\label{multiinitialest}
\big\| \LL_{\Om,\mu}^{\sharp}(f_1,\dots,f_m )\big\|_{L^{\frac{2}{m}}(\bbrn)}\lesssim_{\delta_0} 2^{-\delta_0\mu} \|\Om\|_{L^q(\mathbb S^{mn-1})}
\prod_{j=1}^{m}\|f_j\|_{L^{2}(\bbrn)}, \q \mu> 0.
\end{equation}
For general $1<p_1,\dots,p_m<\infty$, we will prove the following two propositions.
\begin{proposition}\label{keyproposharp}
Let $1<p,p_1,\dots,p_m<\infty$ and $\frac{1}{p}=\frac{1}{p_1}+\cdots+\frac{1}{p_m}$.
Suppose that $\mu\in\bbn$ and $\Omega\in L^1(\mathbb{S}^{mn-1})$.
Then there exist  constants $M>0$ and $C_M>0$ such that
\begin{equation*}
\big\| \LL_{\Om,\mu}^{\sharp}(f_1,\dots,f_m )\big\|_{L^{p}(\bbrn)}\le C_M \mu^{M} \|\Om\|_{L^1(\mathbb S^{mn-1})}
\prod_{j=1}^{m}\|f_j\|_{L^{p_j}(\bbrn)}
\end{equation*}
for Schwartz functions $f_1,\dots,f_m$ on $\bbrn$.
\end{proposition}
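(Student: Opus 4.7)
The plan is to reduce $\LL_{\Omega,\mu}^{\sharp}$ to a shifted multilinear paraproduct and then extract polynomial-in-$\mu$ losses via the shifted maximal and square-function inequalities of Lemma~C, with the $L^1$-norm of $\Omega$ absorbed through a logarithmic moment bound on $K_\mu^0$.

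Since $\wh{K_\mu^\gamma}$ is supported in the annulus $\{\xxxi\in(\bbrn)^m:|\xxxi|\sim 2^{\mu+\gamma}\}$, I would first apply the multilinear paraproduct decomposition of Lemma~D to write
\[T_{K_\mu^\gamma}(f_1,\dots,f_m)=\sum_{l=1}^m T^{(l)}_{K_\mu^\gamma}(f_1,\dots,f_m),\]
where in the $l$-th piece $f_l$ is band-pass filtered at scale $2^{\mu+\gamma}$ and each $f_i$ with $i\ne l$ is low-pass filtered at the same scale. Next, the scaling identity $K_\mu^\gamma(\yyy)=2^{\gamma mn}K_\mu^0(2^\gamma\yyy)$ and the substitution $\yyy\mapsto 2^{-\gamma}\yyy$ turn each translate of step $2^{-\gamma}y_i$ into a shift of size $2^{\mu}y_i$ at the convolution scale $2^{-(\mu+\gamma)}$. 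With $\nu=\mu+\gamma$, the $l$-th truncated sum takes the pointwise form
\[\sum_{\gamma<\tau}T^{(l)}_{K_\mu^\gamma}(f_1,\dots,f_m)(x)=\int_{(\bbrn)^m}K_\mu^0(\yyy)\sum_{\nu<\mu+\tau}\bigl[(\psi_\nu)^{2^\mu y_l}\!\ast f_l\bigr](x)\prod_{i\ne l}\bigl[(\phi_\nu)^{2^\mu y_i}\!\ast f_i\bigr](x)\,d\yyy.\]
Bringing $\sup_\tau|\cdot|$ inside the $\yyy$-integral and invoking Minkowski's integral inequality reduce matters to an $L^{p_1}\times\cdots\times L^{p_m}\to L^p$ bound, at shift $2^\mu\yyy$, for the shifted maximal paraproduct
\[\Pi^{*,(l)}_{\yyy}(f_1,\dots,f_m)(x):=\sup_{\tau'\in\bbz}\Big|\sum_{\nu<\tau'}\bigl[(\psi_\nu)^{y_l}\!\ast f_l(x)\bigr]\prod_{i\ne l}\bigl[(\phi_\nu)^{y_i}\!\ast f_i(x)\bigr]\Big|.\]

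To bound $\Pi^{*,(l)}_{\yyy}$, I split the partial sum as full paraproduct minus tail. H\"older's inequality combined with the shifted square-function estimate of Lemma~C applied to $f_l$, and the shifted Peetre-type maximal estimate of Lemma~C applied to each $f_i$ with $i\ne l$, controls the full paraproduct and produces one polylogarithmic factor $(\log(e+|y_j|))^{O(1)}$ per function. The supremum over tails is handled by a multilinear Cotlar-type pointwise bound, reducing it again to shifted Hardy--Littlewood maximal operators of the $f_j$'s with further polylogarithmic loss. Summing yields
\[\|\Pi^{*,(l)}_{\yyy}(f_1,\dots,f_m)\|_{L^p(\bbrn)}\lesssim\bigl(1+\log(e+|\yyy|)\bigr)^{M_0}\prod_{j=1}^m\|f_j\|_{L^{p_j}(\bbrn)}\]
for some $M_0=M_0(p,m)$, and the substitution $\yyy\mapsto 2^\mu\yyy$ together with $\log(e+2^\mu|\yyy|)\le C(\mu+\log(e+|\yyy|))$ converts this into a bound $\lesssim(\mu+\log(e+|\yyy|))^{M_0}\prod\|f_j\|_{L^{p_j}}$.

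Finally, integrating the previous pointwise estimate against $|K_\mu^0(\yyy)|$ and using the uniform moment bound
\[\int_{(\bbrn)^m}|K_\mu^0(\yyy)|\bigl(1+\log(e+|\yyy|)\bigr)^{M_0}d\yyy\lesssim_{M_0}\|\Omega\|_{L^1(\mathbb{S}^{mn-1})},\qquad\mu\ge 0,\]
which follows since $K^0(\yyy)=\wh{\Psi}(\yyy)\Omega(\yyy')/|\yyy|^{mn}$ is essentially supported in $|\yyy|\sim 1$ with $\|K^0\|_{L^1}\lesssim\|\Omega\|_{L^1(\mathbb{S}^{mn-1})}$ and the rapid decay of $\Psi_\mu$ controls $K_\mu^0=\Psi_\mu\ast K^0$ elsewhere, produces the claimed $C_M\mu^M\|\Omega\|_{L^1(\mathbb{S}^{mn-1})}\prod\|f_j\|_{L^{p_j}}$ bound. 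The hard part will be the preceding step: establishing the shifted maximal paraproduct inequality for $\Pi^{*,(l)}_{\yyy}$ with only a polylogarithmic loss in $|\yyy|$, uniformly across the whole range $1<p_1,\dots,p_m<\infty$, since the supremum over partial sums is not directly covered by Lemma~C and the Cotlar-type reduction must preserve the sharp logarithmic control from Lemma~C.
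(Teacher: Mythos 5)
Your overall strategy — paraproduct decomposition (Lemma~D), rescaling $K_\mu^\gamma(\yyy)=2^{\gamma mn}K_\mu^0(2^\gamma\yyy)$ plus Minkowski to produce shifted Littlewood--Paley pieces, Lemma~C with its $\log(e+|y|)$ losses, and the logarithmic-moment bound \eqref{Omegal1est} on $K_\mu^0$ — is the same skeleton as the paper's. But there is a genuine gap at the step you yourself flag as the "hard part," and your proposed repair (a Cotlar-type inequality) is not what the paper uses and is not obviously viable.

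The difficulty is the handling of $\sup_{\tau}$. You push Minkowski through first and are then left with the sup over $\tau'$ of a shifted partial paraproduct $\Pi^{*,(l)}_{\yyy}$, which indeed is not covered by Lemma~C. The paper does the opposite: it disposes of the $\tau$-supremum \emph{before} the change of variables introduces any shift. Because Lemma~D produces pieces $\Phi^1,\dots,\Phi^{m+1}$ with \emph{at least two} band-pass (not a single high-pass $f_l$ as you wrote), the proof splits into two cases. When $\widehat{\Phi^{m+1}}$ is band-pass, the partial sum $\sum_{\gamma<\tau}\Phi^{m+1}_{\mu+\gamma}\ast T_\mu^\gamma$ has Fourier support in a ball of radius $\sim 2^{\mu+\tau}$, so one writes it as $\Lambda_{\mu+\tau}\ast(\text{full sum})-\Lambda_{\mu+\tau}\ast(\text{tail})$; the full-sum piece is dominated by the maximal function of the $\gamma$-square function, and the tail collapses to $O(1)$ terms $\tau\le\gamma\le\tau+C_0$ by frequency support, so it too is bounded by the $\gamma$-square function via Fefferman--Stein. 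When $\widehat{\Phi^{m+1}}$ is low-pass, two of $\widehat{\Phi^1},\dots,\widehat{\Phi^m}$ must be band-pass; one then simply bounds $\sup_\tau|\sum_{\gamma<\tau}\cdots|\le\sum_\gamma|\cdots|$ and uses Peetre's maximal function, relying on the two shifted square functions to make the $\ell^1$-in-$\gamma$ sum converge. Only \emph{after} the supremum is removed do the rescaling, Minkowski, Lemma~C and the bound \eqref{Omegal1est} enter, exactly as in your last two paragraphs. Your appeal to a multilinear Cotlar inequality (which the paper uses only for the $\mu\le 0$ part in Section~\ref{introsec}) does not provide the needed uniform polylogarithmic control for the shifted maximal paraproduct, so as written the proposal does not close.
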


\begin{proposition}\label{hsmainestpropo}
Let $0< s\le 1$, $\frac{1}{m}<p<\infty$, and $1<p_1,\dots,p_m<\infty$ with $\frac{1}{p}=\frac{1}{p_1}+\cdots+\frac{1}{p_m}$.
Suppose that $\mu\in\bbn$,
$\big(\frac{1}{p_1},\dots,\frac{1}{p_m} \big)\in \mathcal{H}^m(s)$,
and $\Omega\in L^{\frac{1}{1-s}}(\mathbb{S}^{mn-1})$ with \eqref{vanishingmtcondition}.
Then for any $\epsilon>0$, there exists a constant $C_{\epsilon}>0$ such that
\begin{equation}\label{keypropoesttri}
\big\| \LL_{\Om,\mu}^{\sharp}(f_1,\dots,f_m )\big\|_{L^{p}(\bbrn)}\le C_{\epsilon}2^{ \epsilon\mu} \|\Om\|_{L^{\frac{1}{1-s}}(\mathbb S^{mn-1})}
\prod_{j=1}^{m}\|f_j\|_{L^{p_j}(\bbrn)}
\end{equation}
for Schwartz functions $f_1,\dots,f_m$ on $\bbrn$.
\end{proposition}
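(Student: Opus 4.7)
My approach splits according to whether the target $(1/p_1,\ldots,1/p_m) \in \mathcal{H}^m(s)$ lies in the Banach range ($p > 1$) or the quasi-Banach range ($p \le 1$). In the Banach range, $(1/p_1,\ldots,1/p_m) \in \mathcal{H}^m(0) \subseteq \mathcal{H}^m(s)$ and Proposition~\ref{keyproposharp} immediately yields $\|\LL_{\Omega,\mu}^\sharp(f_1,\ldots,f_m)\|_{L^p} \le C_M\,\mu^M\,\|\Omega\|_{L^1(\mathbb{S}^{mn-1})}\prod_j \|f_j\|_{L^{p_j}}$. The conclusion \eqref{keypropoesttri} then follows from two elementary observations: $\|\Omega\|_{L^1(\mathbb{S}^{mn-1})} \lesssim \|\Omega\|_{L^{1/(1-s)}(\mathbb{S}^{mn-1})}$ on the finite-measure sphere by H\"older, and $\mu^M \le C_{\epsilon,M}\,2^{\epsilon \mu}$ for every prescribed $\epsilon > 0$.

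For the quasi-Banach range I would apply the multi-sublinear Marcinkiewicz interpolation of Lemma~\ref{interpollemma}. Good endpoints are available from the Banach step above at any configuration strictly inside $\mathcal{H}^m(0)$ (with constant $C\mu^{M_i}\|\Omega\|_{L^1}$) and from the initial estimate \eqref{multiinitialest} at $(1/2,\ldots,1/2)$ (with constant $C 2^{-\delta_0 \mu}\|\Omega\|_{L^{q_0}}$ for any $q_0 > 2m/(m+1)$). The convex-hull identity \eqref{convexhull}, realising $\mathcal{H}^m(s)$ as the convex hull of the rectangles $\mathscr{V}^m_l(s)$, is the geometric input that lets one enclose the target in the interior of an open $m$-simplex whose $(m+1)$ vertices are drawn from the above endpoints; for $m=2$ (where $(1/2,1/2)$ only reaches $p=1$) one must additionally bring in endpoint extensions coming from vertices of $\mathscr{V}^m_l(s)$ with smaller $p_l$. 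Lemma~\ref{interpollemma} then delivers a bound with $\mu$-factor $2^{-\delta_0 \theta_0 \mu}\mu^{\sum_i M_i\theta_i} \le C_\epsilon\,2^{\epsilon \mu}$ (with $\theta_0$ the weight on the $(1/2,\ldots,1/2)$ vertex) and $\Omega$-factor $\|\Omega\|_{L^{q_0}}^{\theta_0}\|\Omega\|_{L^1}^{1-\theta_0}$.

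The main obstacle is converting this mixed product into $\|\Omega\|_{L^{1/(1-s)}(\mathbb{S}^{mn-1})}$ uniformly in $s \in (0,1]$. When $1/(1-s) > 2m/(m+1)$ one picks $q_0 \in (2m/(m+1),1/(1-s)]$ and the sphere embedding $\|\Omega\|_{L^{q_0}} \lesssim \|\Omega\|_{L^{1/(1-s)}}$ closes the estimate. For small $s$ the interval is empty and $q_0 > 1/(1-s)$ is forced; here one carries out a dyadic decomposition of $\Omega$ by size (the same philosophy alluded to in the introduction), applies the interpolated estimate to each size-level, and sums the per-level bounds using the surplus decay $2^{-\delta_0 \theta_0 \mu}$ to make the geometric series in the size-parameter converge. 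Placing the Banach-range simplex vertices as close to the target as the open-simplex condition of Lemma~\ref{interpollemma} permits keeps $\theta_0$ small, which is what ultimately allows the $L^{q_0}$-versus-$L^{1/(1-s)}$ mismatch to be reconciled within the $C_\epsilon$ constant.
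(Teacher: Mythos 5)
Your Banach-range observation ($p>1$ via Proposition~\ref{keyproposharp} combined with $\|\Omega\|_{L^1}\lesssim\|\Omega\|_{L^{1/(1-s)}}$ and $\mu^M\lesssim_\epsilon 2^{\epsilon\mu}$) is correct and does match part of the paper's argument, but your treatment of the quasi-Banach range has a genuine gap, and the paper's route is substantially different.

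The central problem is geometric. You propose to reach points of $\mathcal H^m(s)$ with $p\le 1$ by interpolating between Banach-range vertices (necessarily lying in $\mathcal H^m(0)=\{\sum_j t_j<1\}$) and the single point $(\tfrac12,\dots,\tfrac12)$ supplied by the initial estimate \eqref{multiinitialest}. For $m\ge 3$ and $s$ close to $1$, $\mathcal H^m(s)$ is not contained in the convex hull of $\overline{\mathcal H^m(0)}$ and $(\tfrac12,\dots,\tfrac12)$: a point such as $(1-\delta,\dots,1-\delta)$ for small $\delta$ lies in $\mathcal H^m(s)$ once $s>1-\delta$, but writing it as $\lambda\, u+(1-\lambda)(\tfrac12,\dots,\tfrac12)$ with $\sum u_j\le 1$, $u_j\ge 0$, $\lambda\in[0,1]$ forces $\lambda(1-\tfrac m2)\ge \tfrac m2-m\delta>0$, which is impossible since $1-\tfrac m2<0$. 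So no open $m$-simplex with vertices drawn from your two sources can surround such a target, and Lemma~\ref{interpollemma} cannot be applied. Your remark about ``additionally bringing in endpoint extensions coming from vertices of $\mathscr V_l^m(s)$ with smaller $p_l$'' points precisely at the missing estimates, but those vertices have $p<1$ and no bound is available there a priori; invoking them is circular. Independently, for $s<(m-1)/(2m)$ one has $1/(1-s)<2m/(m+1)$, so \eqref{multiinitialest} simply does not hold with $q_0=1/(1-s)$; your fallback of ``decomposing $\Omega$ by size inside this proof'' would reproduce the argument the paper uses at the level of \eqref{mainkeyest22}, which already presupposes Proposition~\ref{hsmainestpropo} as an input — that too is circular.

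What the paper does instead, and what your proposal lacks, is a genuinely new endpoint estimate at the faces $t_l=1$: the implication $\mathrm{Claim}\ X(s)\Rightarrow \mathrm{Claim}\ Y(s)$ is proved by a Calder\'on--Zygmund decomposition of $f_l$ (together with the key bound from \cite{Gr_He_Ho_Park_JLMS} for the bad part and Claim $X(s)$ for the good part), yielding the weak-type inequality on $\mathscr R_l^m(s)$. Marcinkiewicz interpolation in one variable at a time then gives $Z(s)$, and \eqref{convexhull} with Lemma~\ref{interpollemma} gives $\Sigma(s)$. Finally, because $X(s)$ itself requires strong $L^{p_0}$ control on the cube $\mathcal C^m(s)$ which extends past $\mathcal H^m(0)$ once $s>1/m$, the paper bootstraps along the sequence $a_\nu=1-(1-\tfrac1m)^\nu$, using the inclusion $\mathcal C^m(a_{\nu+1})\subset\mathcal H^m(a_\nu)$ to pass from $\Sigma(a_\nu)$ to $X(a_{\nu+1})$ and hence upward to all $s<1$. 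Both the CZ-based weak endpoint and this bootstrap are essential to covering the full region $\mathcal H^m(s)$, and neither is present in your proposal.
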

The proof of Propositions \ref{keyproposharp} and \ref{hsmainestpropo} will be given in Sections \ref{pfkeypro1} and \ref{pfkeypro2}, respectively.\\

We note that 
$$\Vert \Omega\Vert_{L^q(\mathbb{S}^{mn-1})}\lesssim \Vert \Omega\Vert_{L^{\infty}(\mathbb{S}^{mn-1})} \q \text{ for all }~ 1\le q<\infty$$
and thus Proposition \ref{hsmainestpropo} deduces that for any $\epsilon>0$ and  $1<p_1,\dots,p_m<\infty$ with $\frac{1}{p}=\frac{1}{p_1}+\cdots+\frac{1}{p_m}$, 
\begin{equation}\label{generalrgest}
\big\| \LL_{\Om,\mu}^{\sharp}(f_1,\dots,f_m )\big\|_{L^{p}(\bbrn)}\lesssim_{\epsilon} 2^{ \epsilon\mu} \|\Om\|_{L^{\infty}(\mathbb S^{mn-1})}
\prod_{j=1}^{m}\|f_j\|_{L^{p_j}(\bbrn)}.
\end{equation}
Interpolating this estimate with the initial estimate \eqref{multiinitialest}, we obtain, via Lemma \ref{interpollemma}, that 
\begin{equation}\label{generalrgestimp}
\big\| \LL_{\Om,\mu}^{\sharp}(f_1,\dots,f_m )\big\|_{L^{p}(\bbrn)}\lesssim_{\delta_1} 2^{ -\delta_1\mu} \|\Om\|_{L^{\infty}(\mathbb S^{mn-1})}
\prod_{j=1}^{m}\|f_j\|_{L^{p_j}(\bbrn)}
\end{equation}
for some $\delta_1>0$. Here, the exponential decay $2^{-\delta_1 \mu}$ could be achieved due to the arbitrarily slow growth in \eqref{generalrgest} while the estimate \eqref{multiinitialest} has a fixed exponential decay in $\mu$. 

Now we introduce a method to improve the $L^{\infty}$ norm of $\Omega$ in \eqref{generalrgestimp} to $L^{\frac{1}{1-s}}$ norm so that \eqref{mainkeyest22} is established.
Suppose that $0<s<1$,
$\big( \frac{1}{p_1},\cdots,\frac{1}{p_m}\big)\in\HH^m(s)$,
and
$$\Vert \Omega\Vert_{L^{\frac{1}{1-s}}(\mathbb{S}^{mn-1})}=\Vert f_1\Vert_{L^{p_1}(\bbrn)}=\cdots=\Vert f_m\Vert_{L^{p_m}(\bbrn)}=1.$$
Then it is sufficient to show the existence of $\epsilon_0>0$ for which
 \begin{equation}\label{mainkeyest33}
\big\|  \LL_{\Om,\mu}^{\sharp}\big(f_1,\dots,f_m\big) \big\|_{L^{p}(\bbrn) } \lesssim_{\epsilon_0}  2^{-\epsilon_0 \mu}, \q \mu\in\bbn.
\end{equation}
For this one, we first decompose the sphere $\mathbb{S}^{mn-1}$ as
$$ \mathbb{S}^{mn-1}=\dot{\bigcup_{l\in\bbn_0}}D^l$$
where
$$D^{l}:=\begin{cases} \big\{\theta\in\mathbb{S}^{mn-1}: |\Omega(\theta)|\le 1\big\} & \text{ if } l=0\\
\big\{\theta\in\mathbb{S}^{mn-1}:  2^{l-1}< |\Omega(\theta)|\le 2^l    \big\}  & \text{ if } l\ge 1 \end{cases}, $$
and write
\begin{equation}\label{omegadecomp}
\Omega(\theta)=\Omega(\theta)-\int_{\bbs^{mn-1}}\Omega(\eta)     \;d\sigma(\eta) = \sum_{l=0}^{\infty}\bigg( \Omega(\theta)\chi_{D^{l}}(\theta)-\int_{D^{l}} \Omega(\eta) \;d\sigma(\eta)\bigg)=:\sum_{l=0}^{\infty}\Omega^{l}(\theta).
\end{equation}
Then the left-hand side of \eqref{mainkeyest33} is bounded by
\begin{equation}\label{lastdecomex}
\bigg( \sum_{l\in\bbn_0} \big\|   \LL_{\Om^l,\mu}^{\sharp}\big(f_1,\dots,f_m\big) \big\|_{L^{p}(\bbrn) }^{\min{(1,p)}} \bigg)^{\frac{1}{\min{(1,p)}}}.
\end{equation}
We note that each $\Omega^{l}$ satisfies the vanishing moment condition 
$$\int_{\bbs^{mn-1}}\Omega^{l}(\theta)\;d\sigma(\theta)=0$$
and thus we can apply \eqref{generalrgestimp}, Propositions \ref{keyproposharp} and \ref{hsmainestpropo} to $\Omega^l$ instead of $\Omega$.
Obviously,
\begin{equation*}
\Vert \Omega^{l}\Vert_{L^{\infty}(\bbs^{mn-1})}\le 2^{l+1}
\end{equation*}	
and thus \eqref{generalrgestimp} yields that
\begin{equation}\label{commonest}
\big\|   \LL_{\Om^l,\mu}^{\sharp}\big(f_1,\dots,f_m\big) \big\|_{L^{p}(\bbrn) }\lesssim 2^{-\delta_1 \mu}\big\Vert \Omega^l\big\Vert_{L^{\infty}(\bbrn)}\lesssim 2^{-\delta_1 \mu}2^l.
\end{equation}

When $p> 1$,  we see
\begin{equation}\label{omll1s}
\big\|\Omega^l\big\|_{L^1(\bbs^{mn-1})} \le2\int_{D^l}\big|\Omega(\theta)\big| \;d\sigma(\theta)\lesssim_s 2^{-l(\frac{1}{1-s}-1)}\int_{D^l}\big| \Omega(\theta)\big|^{\frac{1}{1-s}}\;d\sigma(\theta)\le 2^{-l\frac{s}{1-s}}
\end{equation}
and thus Proposition \ref{keyproposharp} deduces
\begin{equation}\label{commonestpge1}
\big\| \LL_{\Om^l,\mu}^{\sharp}(f_1,\dots,f_m )\big\|_{L^{p}(\bbrn)}\lesssim_M \mu^{M} \big\|\Om^l\big\|_{L^1(\mathbb S^{mn-1})}\lesssim 2^{-l\frac{s}{1-s}}\mu^M
\end{equation} for some $M>0$.
We choose $1-s<\eta<1$, or consequently,
$$\eta\Big(\frac{s}{1-s}\Big)-(1-\eta)>0,$$
and by averaging \eqref{commonest} and \eqref{commonestpge1}, we obtain
\begin{align*}
\big\Vert \LL_{\Omega^l,\mu}^{\sharp}(f_1,\dots,f_m)\big\Vert_{L^p(\bbrn)}&\lesssim  \big(\mu^M    2^{-l\frac{s}{1-s}} \big)^{\eta}\big( 2^{-\delta_1 \mu}2^l     \big)^{1-\eta} =\mu^{M\eta}2^{-\delta_1(1-\eta)\mu} 2^{-l(\eta(\frac{s}{1-s})-(1-\eta))}.
\end{align*}
Clearly, the right-hand side is summable over $l\in\bbn_0$ and thus  \eqref{lastdecomex} is dominated by a constant times
$$\mu^{M\eta}2^{-\delta_1(1-\eta)\mu}\Big(\sum_{l\in\bbn_0} 2^{-l(\eta(\frac{s}{1-s})-(1-\eta) )}\Big)\lesssim 2^{-\epsilon_0 \mu}, \q \mu> 0$$
for some $\epsilon_0>0$,
as desired.

Now assume that $\frac{1}{m}<p\le 1$.
In this case, we note that $$\bigcup_{0<r<s}\HH^m(r)=\HH^m(s)$$
and thus there exists $0<r<s$ such that
$$\Big(\frac{1}{p_1},\cdots,\frac{1}{p_m} \Big)\in \HH^m(r).$$
Choosing 
\begin{equation}\label{eprange}
0<\epsilon<\delta_1\Big( \frac{s-r}{1-s}\Big), \q \text{ or equivalently }\q 0<\frac{\epsilon}{\delta_1}<\frac{s-r}{1-s}
\end{equation}
and applying Proposition \ref{hsmainestpropo} to $\LL_{\Omega^l,\mu}^{\sharp}$, 
we have
\begin{equation*}
\big\| \LL_{\Om^l,\mu}^{\sharp}(f_1,\dots,f_m )\big\|_{L^{p}(\bbrn)}\lesssim_{\epsilon} 2^{ \epsilon\mu} \big\|\Om^l\big\|_{L^{\frac{1}{1-r}}(\mathbb S^{mn-1})}.
\end{equation*}
Similar to \eqref{omll1s}, we can estimate
\begin{align*}
\big\Vert \Omega^l\big\Vert_{L^{\frac{1}{1-r}}(\mathbb{S}^{mn-1})} &\lesssim \bigg(\int_{D^l}\big| \Omega(\theta)\big|^{\frac{1}{1-r}}\; d\sigma(\theta) \bigg)^{1-r}\lesssim_s 
\bigg(\int_{D^l}    2^{-l(\frac{1}{1-s}-\frac{1}{1-r})} \big| \Omega(\theta)\big|^{\frac{1}{1-s}}\; d\sigma(\theta) \bigg)^{1-r}\\
&\lesssim 2^{-l(\frac{1-r}{1-s}-1)}=2^{-l(\frac{s-r}{1-s})}
\end{align*}
and this yields
\begin{equation}\label{commonestple1}
\big\| \LL_{\Om^l,\mu}^{\sharp}(f_1,\dots,f_m )\big\|_{L^{p}(\bbrn)}\lesssim_{\epsilon} 2^{ \epsilon\mu} 2^{-l(\frac{s-r}{1-s})}.
\end{equation}
Now we choose $0<\eta<1$ with $\frac{1-s}{1-r}<\eta<\frac{\delta_1}{\delta_1+\epsilon}$ (possibly due to \eqref{eprange}) so that 
$$\delta_1(1-\eta)-\epsilon \eta>0    \q \text{ and }\q   \eta\Big(\frac{s-r}{1-s}\Big)-(1-\eta)>0$$ 
and average the estimates \eqref{commonest} and \eqref{commonestple1} to obtain
$$\big\| \LL_{\Om^l,\mu}^{\sharp}(f_1,\dots,f_m )\big\|_{L^{p}(\bbrn)}\lesssim \big(   2^{-\delta_1 \mu}2^l   \big)^{1-\eta}\big(  2^{ \epsilon\mu} 2^{-l\frac{s-r}{1-s}}   \big)^{\eta}
=2^{-\mu(\delta_1(1-\eta)-\epsilon \eta)}2^{-l( \eta (\frac{s-r}{1-s})-(1-\eta)   )}.$$
Finally, \eqref{lastdecomex} is bounded by a constant multiple of 
$$2^{-\mu(\delta_1(1-\eta)-\epsilon \eta)}\bigg( \sum_{l\in\bbn_0}   2^{-lp( \eta (\frac{s-r}{1-s})-(1-\eta)   )}    \bigg)^{\frac{1}{p}}\sim 2^{-\mu(\delta_1(1-\eta)-\epsilon \eta)}.$$
By taking $\epsilon_0=\delta_1(1-\eta)-\epsilon \eta>0$,
we complete the proof of \eqref{mainkeyest33}.

\hfill

\section{Proof of Proposition \ref{keyproposharp}}\label{pfkeypro1}
Without loss of generality, we may assume 
$$\Vert f_1\Vert_{L^{p_1}(\bbrn)}=\cdots=\Vert f_m\Vert_{L^{p_m}(\bbrn)}=\Vert \Omega\Vert_{L^1(\mathbb{S}^{mn-1})}= 1.$$
We first employ Littlewood-Paley decompositions for each $f_j$ so that
$$\sum_{\gamma<\tau}T_{K_{\mu}^{\gamma}}\big(f_1,\dots,f_m\big)(x)=\sum_{\gamma<\tau}\sum_{k_1,\dots,k_m\in\bbz}T_{K_{\mu}^{\gamma}}\big(\psi_{k_1}\ast f_1,\dots,\psi_{k_m}\ast f_m\big)(x)    $$
and this can be written, in view of Lemma \ref{yyvt}, as a finite sum of form
\begin{equation*}
\sum_{\gamma<\tau} \Phi_{\mu+\gamma}^{m+1}\ast T_{\mu}^{\gamma}\big(f_1,\dots,f_m\big)(x)
\end{equation*}
where
$$T_{\mu}^{\gamma}\big(f_1,\dots,f_m\big)(x):= T_{K_{\mu}^{\gamma}}\big(\Phi_{\mu+\gamma}^1\ast f_1,\dots,\Phi_{\mu+\gamma}^m\ast f_m\big)(x).$$
Therefore, it suffices to show that there exists $M>0$ such that
\begin{equation}\label{lpfinalgoal}
\bigg\Vert \sup_{\tau\in\bbz}\Big| \sum_{\gamma<\tau} \Phi_{\mu+\gamma}^{m+1}\ast T_{\mu}^{\gamma}\big(f_1,\dots,f_m\big) \Big|\bigg\Vert_{L^p(\bbrn)}\lesssim_M \mu^{M}.
\end{equation}
Note that at least two of $\widehat{\Phi^1}$, $\widehat{\Phi^2}$, $\cdots$, $\widehat{\Phi^{m+1}}$  are compactly supported in an annulus, and 
the inequality \eqref{lpfinalgoal} will be achieved separately depending on whether the last one $\widehat{\Phi^{m+1}}$ is supported in an annulus or not.
One of the key estimates for both cases is that for any $M>0$ 
\begin{equation}\label{Omegal1est}
\int_{(\bbrn)^m}\big( \ln{(e+|\yyy|)}\big)^M\big|K_{\mu}^0(\yyy) \big| \; d\yyy\lesssim_M \Vert \Omega\Vert_{L^1(\mathbb{S}^{mn-1})}=1
\end{equation}
which is known in \cite[page 2267]{Do_Sl2024}.

\hfill

{\bf Case 1.} Suppose that $\widehat{\Phi^{m+1}}$ is supported in an annulus.
In this case, we may assume $\widehat{\Phi^{1}}$ is also supported in an annulus, as the other cases follow in a symmetric way.

We first claim 
\begin{equation}\label{suptauinz}
\bigg\Vert \sup_{\tau\in\bbz}\Big| \sum_{\gamma<\tau} \Phi_{\mu+\gamma}^{m+1}\ast T_{\mu}^{\gamma}\big(f_1,\dots,f_m\big) \Big|\bigg\Vert_{L^p(\bbrn)}\lesssim \bigg\Vert \Big(\sum_{\gamma\in\bbz} \big|T_{\mu}^{\gamma}\big(f_1,\dots,f_m\big) \big|^2 \Big)^{\frac{1}{2}}\bigg\Vert_{L^p(\bbrn)}.
\end{equation} 
To verify this, we observe that the Fourier transform of $ \sum_{\gamma<\tau} \Phi_{\mu+\gamma}^{m+1}\ast T_{\mu}^{\gamma}\big(f_1,\dots,f_m\big)$ is supported in a ball of radius $C 2^{\mu+\tau}$, centered at the origin, for some $C>0$ and thus it can be written as
\begin{align*}
 &\sum_{\gamma<\tau} \Phi_{\mu+\gamma}^{m+1}\ast T_{\mu}^{\gamma}\big(f_1,\dots,f_m\big)=\Lambda_{\mu+\tau}\ast  \Big(\sum_{\gamma<\tau} \Phi_{\mu+\gamma}^{m+1}\ast T_{\mu}^{\gamma}\big(f_1,\dots,f_m\big)\Big)\\
 &\quad=\Lambda_{\mu+\tau}\ast \Big( \sum_{\gamma\in\bbz} \Phi_{\mu+\gamma}^{m+1}\ast T_{\mu}^{\gamma}\big(f_1,\dots,f_m\big)\Big)- \Lambda_{\mu+\tau}\ast \Big( \sum_{\gamma\ge \tau} \Phi_{\mu+\gamma}^{m+1}\ast T_{\mu}^{\gamma}\big(f_1,\dots,f_m\big)\Big)
\end{align*}
where $\Lambda_{\mu+\tau}$ is a radial Schwartz function on $\bbrn$ whose Fourier transform is equal to $1$ on the ball $B(0,C2^{\mu+\tau})$ and is supported in a larger ball of radius $\tilde{C}2^{\mu+\tau}$ for some $\tilde{C}>C$.
Therefore, the left-hand side of \eqref{suptauinz} is bounded by the sum of
$$\mathcal{I}_1^{\mu}:=\bigg\Vert  \sup_{\tau\in\bbz}\Big|\Lambda_{\mu+\tau}\ast \Big(   \sum_{\gamma\in\bbz} \Phi_{\mu+\gamma}^{m+1}\ast T_{\mu}^{\gamma}\big(f_1,\dots,f_m\big)   \Big) \Big|       \bigg\Vert_{L^p(\bbrn)}$$
and
$$\mathcal{I}_2^{\mu}:=\bigg\Vert  \sup_{\tau\in\bbz}\Big|    \Lambda_{\mu+\tau}\ast \Big( \sum_{\gamma\ge \tau} \Phi_{\mu+\gamma}^{m+1}\ast T_{\mu}^{\gamma}\big(f_1,\dots,f_m\big)\Big)      \Big|      \bigg\Vert_{L^p(\bbrn)}.$$

Using \eqref{phikfptest}, the $L^p$ boundedness for $\mathcal{M}$, and \eqref{ltchaest}, we have
\begin{align*}
\mathcal{I}_1^{\mu}&\lesssim \bigg\Vert \mathcal{M}\Big(   \sum_{\gamma\in\bbz} \Phi_{\mu+\gamma}^{m+1}\ast T_{\mu}^{\gamma}\big(f_1,\dots,f_m\big)   \Big) \bigg\Vert_{L^p(\bbrn)}\lesssim \bigg\Vert    \sum_{\gamma\in\bbz} \Phi_{\mu+\gamma}^{m+1}\ast T_{\mu}^{\gamma}\big(f_1,\dots,f_m\big)   \bigg\Vert_{L^p(\bbrn)}\\
&\lesssim \bigg\Vert \Big( \sum_{\gamma\in\bbz}\big|  \Phi_{\mu+\gamma}^{m+1}\ast T_{\mu}^{\gamma}\big(f_1,\dots,f_m\big) \big|^2    \Big)^{\frac{1}{2}}\bigg\Vert_{L^p(\bbrn)}\lesssim \bigg\Vert \Big( \sum_{\gamma\in\bbz}\Big|  \mathcal{M}\Big( T_{\mu}^{\gamma}\big(f_1,\dots,f_m\big)\Big) \Big|^2    \Big)^{\frac{1}{2}}\bigg\Vert_{L^p(\bbrn)}\\
&\lesssim \bigg\Vert \Big( \sum_{\gamma\in\bbz}\big|  T_{\mu}^{\gamma}\big(f_1,\dots,f_m\big)\big|^2    \Big)^{\frac{1}{2}}\bigg\Vert_{L^p(\bbrn)}
\end{align*}
 where we recall the Fourier transform of  $\Phi_{\mu+\gamma}^{m+1}$ is supported in an annulus of size $2^{\mu+\gamma}$.
 
To estimate $\mathcal{I}^{\mu}_2$, we note that 
$\wh{\Lambda_{\mu+\tau}}$ is supported in a ball of radius $\wt{C}2^{\mu+\tau}$ while
$\wh{\Phi_{\mu+\gamma}^{m+1}}$ is in an annulus of size $2^{\mu+\gamma}$. Hence, there is a positive integer $C_0$ such that
$$\Lambda_{\mu+\tau}\ast \Phi_{\mu+\gamma}^{m+1}\ast T_{\mu}^{\gamma}(f_1,\dots,f_m)=0\quad \text{unless }~\gamma\le \tau +C_0.$$
This yields that
\begin{align*}
\mathcal{I}_2^{\mu}&=\bigg\Vert  \sup_{\tau\in\bbz}\Big|   \Lambda_{\mu+\tau}\ast \Big( \sum_{\gamma= \tau}^{\tau+C_0} \Phi_{\mu+\gamma}^{m+1}\ast T_{\mu}^{\gamma}\big(f_1,\dots,f_m\big)\Big)      \Big|      \bigg\Vert_{L^p(\bbrn)}\\
&=\bigg\Vert  \sup_{\tau\in\bbz}\Big|  \Lambda_{\mu+\tau}\ast \Big( \sum_{\gamma= 0}^{C_0} \Phi_{\mu+\tau+\gamma}^{m+1}\ast T_{\mu}^{\gamma+\tau}\big(f_1,\dots,f_m\big)\Big)      \Big|      \bigg\Vert_{L^p(\bbrn)}\\
&\le \sum_{\gamma=0}^{C_0} \bigg\Vert  \sup_{\tau\in\bbz}\Big|   \Lambda_{\mu+\tau}\ast   \Phi_{\mu+\tau+\gamma}^{m+1}\ast T_{\mu}^{\gamma+\tau}\big(f_1,\dots,f_m\big)      \Big|      \bigg\Vert_{L^p(\bbrn)}\\
&\le \sum_{\gamma=0}^{C_0} \bigg\Vert \Big(  \sum_{\tau\in\bbz}    \Big|   \Lambda_{\mu+\tau}\ast   \Phi_{\mu+\tau+\gamma}^{m+1}\ast T_{\mu}^{\gamma+\tau}\big(f_1,\dots,f_m\big) \Big|^2       \Big)^{\frac{1}{2}}       \bigg\Vert_{L^p(\bbrn)}.
\end{align*}
Now using \eqref{phikfptest} and \eqref{hlmax}, the preceding expression is bounded by a constant times
\begin{align*}
 &\sum_{\gamma=0}^{C_0} \bigg\Vert \Big(  \sum_{\tau\in\bbz}    \Big|  \mathcal{M}\Big(  T_{\mu}^{\gamma+\tau}\big(f_1,\dots,f_m\big) \Big)\Big|^2       \Big)^{\frac{1}{2}}       \bigg\Vert_{L^p(\bbrn)}\\
 &=(C_0+1)\bigg\Vert \Big(  \sum_{\tau\in\bbz}    \Big|  \mathcal{M}\Big(   T_{\mu}^{\tau}\big(f_1,\dots,f_m\big) \Big)\Big|^2       \Big)^{\frac{1}{2}}       \bigg\Vert_{L^p(\bbrn)}\\
 &\lesssim \bigg\Vert \Big(  \sum_{\tau\in\bbz}    \big|    T_{\mu}^{\tau}\big(f_1,\dots,f_m\big) \big|^2       \Big)^{\frac{1}{2}}       \bigg\Vert_{L^p(\bbrn)},
\end{align*}
which completes the proof of the claim \eqref{suptauinz}.

Now we need to prove that 
\begin{equation}\label{2ndkeyest}
\bigg\Vert \Big(\sum_{\gamma\in\bbz} \big|T_{\mu}^{\gamma}\big(f_1,\dots,f_m\big) \big|^2 \Big)^{\frac{1}{2}}\bigg\Vert_{L^p(\bbrn)}\lesssim_M \mu^M 
\end{equation} for some $M>0$.
Applying \eqref{kernelcharacter} and performing a change of variables,
\begin{align}
T_{\mu}^{\gamma}\big(f_1,\dots,f_m\big)(x)&=\int_{(\bbrn)^m} 2^{\gamma mn}K_{\mu}^{0}(2^{\gamma}\yyy)\prod_{j=1}^{m}\Phi_{\mu+\gamma}^{j}\ast f_j(x-y_j)        \; d\yyy \nonumber\\
&=\int_{(\bbrn)^m} K_{\mu}^{0}(\yyy)\prod_{j=1}^{m}\Phi_{\mu+\gamma}^{j}\ast f_j(x-2^{-\gamma}y_j)\; d\yyy \label{tmugaptest}
\end{align}
and then Minkowski's inequality yields that the left-hand side of \eqref{2ndkeyest} is bounded by
\begin{equation}\label{switchest}
\int_{(\bbrn)^m}\big| K_{\mu}^{0}(\yyy)\big| \bigg\Vert  \bigg( \sum_{\gamma\in\bbz}\Big|   \prod_{j=1}^{m}  \Phi_{\mu+\gamma}^{j}\ast f_j(\cdot-2^{-\gamma}y_j)          \Big|^2   \bigg)^{\frac{1}{2}}  \bigg\Vert_{L^p(\bbrn)}    \; d\yyy.
\end{equation}
The $L^p$ norm would be
\begin{align*}
&\bigg\Vert  \bigg( \sum_{\gamma\in\bbz}\Big|   \prod_{j=1}^{m}  \Phi_{\gamma}^{j}\ast f_j(\cdot-2^{-\gamma+\mu}y_j)          \Big|^2   \bigg)^{\frac{1}{2}}  \bigg\Vert_{L^p(\bbrn)} \\
&\le \bigg\Vert \bigg( \sum_{\gamma\in\bbz}\big| \Phi_{\gamma}^{1}\ast f_1(\cdot-2^{-\gamma+\mu}y_1)\big|^2    \bigg)^{\frac{1}{2}} \bigg( \prod_{j=2}^{m}\sup_{\gamma\in\bbz}\big| \Phi_{\gamma}^j\ast f_j(\cdot-2^{-\gamma+\mu}y_j)\big| \bigg)   \bigg\Vert_{L^p(\bbrn)}
\end{align*}
and then this is no more than
\begin{align*}
&\bigg\Vert       \bigg( \sum_{\gamma\in\bbz}\big| \Phi_{\gamma}^{1}\ast f_1(\cdot-2^{-\gamma+\mu}y_1)\big|^2    \bigg)^{\frac{1}{2}}      \bigg\Vert_{L^{p_1}(\bbrn)}\prod_{j=2}^{m}\Big\Vert \sup_{\gamma\in\bbz} \big| \Phi_{\gamma}^j\ast f_j(\cdot-2^{-\gamma+\mu}y_j)\big|     \Big\Vert_{L^{p_j}(\bbrn)}\\
&\lesssim    \big(\ln{(e+2^{\mu}|y_1|)} \big)^{|\frac{1}{p_1}-\frac{1}{2}|} \Big(\prod_{j=2}^{m}\big(\ln{(e+2^{\mu}|y_j|)}\big)^{\frac{1}{p_j}} \Big)   \\
&\lesssim \mu^{|\frac{1}{p_1}-\frac{1}{2}|+\sum_{j=2}^{m}\frac{1}{p_j}} \big( \ln{(e+|\yyy|)} \big)^{|\frac{1}{p_1}-\frac{1}{2}|+\sum_{j=2}^{m}\frac{1}{p_j}}
\end{align*}
by H\"older's inequality and Lemma \ref{shiftsquare}.
This proves \eqref{switchest} is bounded by a constant multiple of
\begin{equation}  \label{case1endest}
\mu^{|\frac{1}{p_1}-\frac{1}{2}|+\sum_{j=2}^{m}\frac{1}{p_j}}  \int_{(\bbrn)^m}  \big| K_{\mu}^{0}(\yyy)\big| \big( \ln{(e+|\yyy|)} \big)^{|\frac{1}{p_1}-\frac{1}{2}|+\sum_{j=2}^{m}\frac{1}{p_j}} \; d\yyy \lesssim \mu^{|\frac{1}{p_1}-\frac{1}{2}|+\sum_{j=2}^{m}\frac{1}{p_j}} 
\end{equation}
where the inequality follows from \eqref{Omegal1est}.
Setting $M=|\frac{1}{p_1}-\frac{1}{2}|+\sum_{j=2}^{m}\frac{1}{p_j}$,
the inequality \eqref{lpfinalgoal} follows.

\hfill

{\bf Case 2.} If $\wh{\Phi^{m+1}}$ is not supported in an annulus, then at least two of $\wh{\Phi^1},\dots,\wh{\Phi^m}$ are supported in an annulus. We will consider only the case when the two are $\wh{\Phi_1}$ and $\wh{\Phi_2}$ as a symmetric argument is applicable to the other cases.
Then \eqref{phikfpetreept} and \eqref{peetremax} yield the left-hand side of \eqref{lpfinalgoal} is bounded by
\begin{align*}
\bigg\Vert \sum_{\gamma\in\bbz}\big| \Phi_{\mu+\gamma}^{m+1}\ast T_{\mu}^{\gamma}(f_1,\dots,f_m)\big|\bigg\Vert_{L^p(\bbrn)}&=\bigg\Vert \sum_{\gamma\in\bbz}\big| \Phi_{\gamma}^{m+1}\ast T_{\mu}^{\gamma-\mu}(f_1,\dots,f_m)\big|\bigg\Vert_{L^p(\bbrn)} \\
&\lesssim_{\sigma}\bigg\Vert \sum_{\gamma\in\bbz} \mathfrak{M}_{\sigma,2^{\gamma}}\Big( T_{\mu}^{\gamma-\mu}(f_1,\dots,f_m)\Big)\bigg\Vert_{L^p(\bbrn)}\\
&\lesssim_{\sigma}  \bigg\Vert \sum_{\gamma\in\bbz} \Big| T_{\mu}^{\gamma-\mu}(f_1,\dots,f_m)\Big|\bigg\Vert_{L^p(\bbrn)}
\end{align*}
for $\sigma>n$, where we note that the Fourier transform of $T_{\mu}^{\gamma-\mu}(f_1,\dots,f_m)$ is supported in a ball of radius comparable to $2^{\gamma}$.
Using \eqref{tmugaptest} and Minkowski's inequality, the last displayed expression is controlled by
\begin{equation}\label{intkmu0l1est}
\int_{(\bbrn)^m}  \big| K_{\mu}^0(\yyy)\big| \bigg\Vert \sum_{\gamma\in\bbz}\Big| \prod_{\j=1}^{m}\Phi^{j}_{\gamma}\ast f_j(\cdot-2^{-\gamma+\mu}y_j)\Big| \bigg\Vert_{L^p(\bbrn)}     \; d\yyy.
\end{equation}
Now we bound the $L^p$ norm by
\begin{align*}
\bigg\Vert \bigg(\prod_{j=1}^{2}\Big( \sum_{\gamma\in\bbz}\big| \Phi_{\gamma}^{j}\ast f_j(\cdot-2^{-\gamma+\mu}y_j)\big|^2    \Big)^{\frac{1}{2}} \bigg)\bigg( \prod_{j=3}^{m}\sup_{\gamma\in\bbz}\big| \Phi_{\gamma}^j\ast f_j(\cdot-2^{-\gamma+\mu}y_j)\big|\bigg)\bigg\Vert_{L^p(\bbrn)}
\end{align*}
and H\"older's inequality and Lemma \ref{shiftsquare} deduce that the above expression is dominated by 
\begin{align*}
&\bigg(\prod_{j=1}^{2}\Big\Vert \Big( \sum_{\gamma\in\bbz}\big| \Phi_{\gamma}^{j}\ast f_j(\cdot-2^{-\gamma+\mu}y_j)\big|^2    \Big)^{\frac{1}{2}} \Big\Vert_{L^{p_j}(\bbrn)}\bigg) \bigg(\prod_{j=3}^{m}\Big\Vert \sup_{\gamma\in\bbz}\big| \Phi_{\gamma}^j\ast f_j(\cdot-2^{-\gamma+\mu}y_j)\big|\Big\Vert_{L^{p_j}(\bbrn)} \bigg)\\
&\lesssim       \bigg(\prod_{j=1}^{2}\big( \ln{(e+2^{\mu}y_j)}\big)^{|\frac{1}{p_j}-\frac{1}{2}|} \bigg)\bigg( \prod_{j=3}^{m}\big(\ln{(e+2^{\mu}|y_j|)}\big)^{\frac{1}{p_j}}\bigg)      \\
&\lesssim \mu^{|\frac{1}{p_1}-\frac{1}{2}|+|\frac{1}{p_2}-\frac{1}{2}|+\sum_{j=3}^{m}\frac{1}{p_j}}\big(\ln{(e+|\yyy|)}\big)^{|\frac{1}{p_1}-\frac{1}{2}|+|\frac{1}{p_2}-\frac{1}{2}|+\sum_{j=3}^{m}\frac{1}{p_j}}.
\end{align*}
Therefore, \eqref{intkmu0l1est} can be estimated by
\begin{align*}
&\mu^{|\frac{1}{p_1}-\frac{1}{2}|+|\frac{1}{p_2}-\frac{1}{2}|+\sum_{j=3}^{m}\frac{1}{p_j}} \int_{(\bbrn)^m}  \big| K_{\mu}^{0}(\yyy)\big| \big( \ln{(e+|\yyy|)} \big)^{|\frac{1}{p_1}-\frac{1}{2}|+|\frac{1}{p_2}-\frac{1}{2}|+\sum_{j=3}^{m}\frac{1}{p_j}} \; d\yyy \\
&\lesssim \mu^{|\frac{1}{p_1}-\frac{1}{2}|+|\frac{1}{p_2}-\frac{1}{2}|+\sum_{j=3}^{m}\frac{1}{p_j}},
\end{align*}
similar to \eqref{case1endest}. This finishes the proof of Proposition \ref{keyproposharp}.\\


\section{Proof of Proposition \ref{hsmainestpropo}}\label{pfkeypro2}
Let $0<s<1$ and recall $J_m=\{1,\dots,m\}$.
The proof is based on the induction argument used in \cite{Gr_He_Ho_Park_JLMS}.
In order to describe the idea,
we define
$$\mathscr{R}^m_l(s):=\{(t_1,\dots,t_m): t_l=1 \q \text{and }~ 0\le t_j<s ~\text{ for }~ j\not= l\}, \qquad l\in J_m$$
and 
$$\mathcal{C}^m(s):=\{(t_1,\dots,t_m):0<t_j<s,\q j\in J_m\}.$$ 

\begin{customclaim}{$X(s)$}
 Let $\frac{1}{m}<p<\infty$ and $(\frac{1}{p_1},\dots,\frac{1}{p_m})\in\mathcal{C}^m(s) $  with $\frac{1}{p_1}+\dots+\frac{1}{p_m}=\frac{1}{p}.$
 Suppose that $0<\epsilon<1$ and $\mu\in\bbn$. Then there exists $C_{\epsilon}>0$ such that
 $$\big\Vert \LL_{\Om,\mu}^{\sharp}(f_1,\dots,f_m)   \big\Vert_{L^{p}(\bbrn)}  \le C_{\epsilon} 2^{\epsilon \mu} \Vert \Omega\Vert_{L^{\frac{1}{1-s}}(\mathbb{S}^{mn-1})}  \prod_{j=1}^{m}\Vert f_j\Vert_{L^{p_j}(\bbrn)}.$$
\end{customclaim}
\begin{customclaim}{$Y(s)$}
Let $\frac{1}{m}<p<1$ and $(\frac{1}{p_1},\dots,\frac{1}{p_m})\in \bigcup_{l=1}^{m}\mathscr{R}^m_l(s)$ with $\frac{1}{p_1}+\dots+\frac{1}{p_m}=\frac{1}{p}$.
Suppose that $0<\epsilon<1$ and $\mu\in\bbn$. Then there exists $C_{\epsilon}>0$ such that
\begin{equation*}
\big\Vert \LL_{\Om,\mu}^{\sharp}(f_1,\dots,f_m)\big\Vert_{L^{p,\infty}(\bbrn)}\le C_{\epsilon} 2^{\epsilon \mu}  \Vert \Omega\Vert_{L^{\frac{1}{1-s}}(\mathbb{S}^{mn-1})}  \prod_{j=1}^{m}\Vert f_j\Vert_{L^{p_j}(\bbrn)}.
\end{equation*}
\end{customclaim}
\begin{customclaim}{$Z(s)$}
Let $\frac{1}{m}<p<\infty$ and $(\frac{1}{p_1},\dots,\frac{1}{p_m})\in\bigcup_{l=1}^m\mathscr{V}_l^m(s)$ with $\frac{1}{p_1}+\dots+\frac{1}{p_m}=\frac{1}{p}$, 
 where $\mathscr{V}_l^m(s)$ is defined in \eqref{defvlm}.
Suppose that $0<\epsilon<1$ and $\mu\in\bbn$. Then there exists $C_{\epsilon}>0$ such that
\begin{equation*}
\big\Vert \LL_{\Om,\mu}^{\sharp}(f_1,\dots,f_m)\big\Vert_{L^{p}(\bbrn)}\le C_{\epsilon} 2^{\epsilon \mu}  \Vert \Omega\Vert_{L^{\frac{1}{1-s}}(\mathbb{S}^{mn-1})}   \prod_{j=1}^{m}\Vert f_j\Vert_{L^{p_j}(\bbrn)}.
\end{equation*}
\end{customclaim}
\begin{customclaim}{$\Sigma(s)$}
Let $\frac{1}{m}<p<\infty$ and $(\frac{1}{p_1},\dots,\frac{1}{p_m})\in\mathcal{H}^m(s)$  with $\frac{1}{p_1}+\dots+\frac{1}{p_m}=\frac{1}{p}$.
Suppose that $0<\epsilon<1$ and $\mu\in\bbn$. Then there exists $C_{\epsilon}>0$ such that
\begin{equation*}
\big\Vert \LL_{\Om,\mu}^{\sharp}(f_1,\dots,f_m)\big\Vert_{L^{p}(\bbrn)}\le C_{\epsilon}2^{\epsilon \mu}   \Vert \Omega\Vert_{L^{\frac{1}{1-s}}(\mathbb{S}^{mn-1})}  \prod_{j=1}^{m}\Vert f_j\Vert_{L^{p_j}(\bbrn)}.
\end{equation*}
\end{customclaim}

 Please see Figure \ref{fig1} for the region where the claims hold in the trilinear setting.
 \begin{figure}[h]
\begin{tikzpicture}
\path[fill=green!5] (0-\gap,0,1.25)--(1-\gap,0,1.25)--(1-\gap,1,1.25)--(0-\gap,1,1.25)--(0-\gap,0,1.25);
\path[fill=green!5] (1-\gap,0,1.25)--(1-\gap,0,0)--(1-\gap,1,0)--(1-\gap,1,1.25)--(1-\gap,0,1.25);
\path[fill=green!5] (1-\gap,1,0)--(0-\gap,1,0)--(0-\gap,1,1.25)--(1-\gap,1,1.25)--(1-\gap,1,0);

\draw[dash pattern= { on 2pt off 1pt}](0-\gap,1,0)--(1-\gap,1,0)--(1-\gap,0,0)--(1-\gap,0,1.25)--(0-\gap,0,1.25)--(0-\gap,1,1.25)--(0-\gap,1,0);
\draw[dash pattern= { on 2pt off 1pt}](0-\gap,1,1.25)--(1-\gap,1,1.25)--(1-\gap,0,1.25);
\draw[dash pattern= { on 2pt off 1pt}](1-\gap,1,1.25)--(1-\gap,1,0);

\draw[dotted] (1-\gap,1,3)--(1-\gap,2.25,1.25)--(2.25-\gap,1,1.25)--(1-\gap,1,3);
\draw[dotted] (1-\gap,1,3)--(1-\gap,0,3)--(2.25-\gap,0,1.25)--(2.25-\gap,1,1.25);
\draw[dotted] (1-\gap,2.25,1.25)--(1-\gap,2.25,0)--(2.25-\gap,1,0)--(2.25-\gap,1,1.25);
\draw[dotted](1-\gap,1,3)--(0-\gap,1,3)--(0-\gap,2.25,1.25)--(1-\gap,2.25,1.25);
\draw[dotted] (2.25-\gap,0,1.25)--(2.25-\gap,0,0)--(2.25-\gap,1,0);
\draw[dotted] (0-\gap,2.25,1.25)--(0-\gap,2.25,0)--(1-\gap,2.25,0);
\draw[dotted] (1-\gap,0,3)--(0-\gap,0,3)--(0-\gap,1,3);

\draw[dash pattern= { on 1pt off 1pt}] (0-\gap,0,0)--(1-\gap,0,0);
\draw[dash pattern= { on 1pt off 1pt}] (0-\gap,0,0)--(0-\gap,1,0);
\draw[dash pattern= { on 1pt off 1pt}] (0-\gap,0,0)--(0-\gap,0,1.25);
\draw [->] (0-\gap,0,1.25)--(0-\gap,0,4);
\draw [->] (0-\gap,1,0)--(0-\gap,3,0);
\draw [->] (1-\gap,0,0)--(3-\gap,0,0);

\node [below] at (3-\gap,0,0) {\tiny$t_1$};
\node [left] at (0-\gap,3,0) {\tiny$t_2$};
\node [below] at (0-\gap,0,4) {\tiny$t_3$};

\node [above right] at (0.8-\gap,0.9,0) {\tiny$(s,s,0)$};
\node [right] at (0.8-\gap,0,1.7) {\tiny$(s,0,s)$};
\node [ left] at (0.4-\gap,0,1.7) {\tiny$(0,0,s)$};
\node [left] at (0.4-\gap,1.3,1.7) {\tiny$(0,s,s)$};
\node [right] at (0.8-\gap,1.3,1.7) {\tiny$(s,s,s)$};
\node [below right] at (0.8-\gap,0,-0.2) {\tiny$(s,0,0)$};
\node [above left] at (0.8-\gap,0.9,0) {\tiny$(0,s,0)$};

\node  at (0.6-\gap,0.4,0.8) {$\mathcal{C}^3(s)$};

\node [below] at (1.2-\gap,-1.5,1.3) {in $\mathrm{ \bf Claim}~ X(s)$};

\path[fill=green!5] (0,0,3)--(1,0,3)--(1,1,3)--(0,1,3)--(0,0,3);
\path[fill=green!5] (2.25,0,1.25)--(2.25,0,0)--(2.25,1,0)--(2.25,1,1.25)--(2.25,0,1.25);
\path[fill=green!5] (1,2.25,0)--(0,2.25,0)--(0,2.25,1.25)--(1,2.25,1.25)--(1,2.25,0);

\draw[dotted] (1,1,3)--(1,2.25,1.25)--(2.25,1,1.25)--(1,1,3);
\draw[dotted] (1,1,3)--(1,0,3)--(2.25,0,1.25);
\draw[dotted] (1,2.25,0)--(2.25,1,0)--(2.25,1,1.25);
\draw[dotted] (0,1,3)--(0,2.25,1.25)--(1,2.25,1.25);

\draw[dash pattern= { on 2pt off 1pt}] (1,0,3)--(1,1,3)--(0,1,3);
\draw[dash pattern= { on 2pt off 1pt}]  (0,2.25,1.25)--(1,2.25,1.25)--(1,2.25,0);
\draw[dash pattern= { on 2pt off 1pt}] (2.25,0,1.25)--(2.25,1,1.25)--(2.25,1,0);

\draw[dash pattern= { on 2pt off 1pt}] (2.25,0,1.25)--(2.25,0,0)--(2.25,1,0);
\draw[dash pattern= { on 2pt off 1pt}] (0,2.25,1.25)--(0,2.25,0)--(1,2.25,0);
\draw[dash pattern= { on 2pt off 1pt}] (1,0,3)--(0,0,3)--(0,1,3);

\draw[dash pattern= { on 1pt off 1pt}] (0,0,0)--(0.3,0,0);
\draw[dash pattern= { on 1pt off 1pt}] (1.5,0,0)--(2.25,0,0);
\draw[dash pattern= { on 1pt off 1pt}] (0,0,0)--(0,0.3,0);
\draw[dash pattern= { on 1pt off 1pt}] (0,1.5,0)--(0,2.25,0);
\draw[dash pattern= { on 1pt off 1pt}] (0,0,0)--(0,0,3);
\draw [->] (0,0,3)--(0,0,4);
\draw [->] (0,2.25,0)--(0,3,0);
\draw [->] (2.25,0,0)--(3,0,0);

\draw [-] (0.3,0,0)--(1.5,0,0);
\draw [-] (0,0.3,0)--(0,1.5,0);

\node [below] at (3,0,0) {\tiny$t_1$};
\node [left] at (0,3,0) {\tiny$t_2$};
\node [below] at (0,0,4) {\tiny$t_3$};

\node  at (2.4,0.7,0.7) {$\mathscr{R}_1^3(s)$};
\node  at (0.5,2.25,0.7) {$\mathscr{R}^3_2(s)$};
\node  at (0.51,0.5,3) {$\mathscr{R}^3_3(s)$};

\node [left] at (0,2.25,-0.2) {\tiny$(0,1,0)$};
\node [right] at (0.9,2.25,-0.2) {\tiny$(s,1,0)$};
\node [left] at (0,2.25,1.1) {\tiny$(0,1,s)$};
\node [right] at (1,2.25,1.2) {\tiny$(s,1,s)$};

\node [right] at (2.1,1.2,0) {\tiny$(1,s,0)$};
\node [right] at (2.1,0.1,0) {\tiny$(1,0,0)$};
\node [right] at (2,0,1.6) {\tiny$(1,0,s)$};

\node [right] at (0.9,0,3) {\tiny$(s,0,1)$};
\node [left] at (0,0,2.8) {\tiny$(0,0,1)$};
\node [left] at (0,1,2.8) {\tiny$(0,s,1)$};

\node [below] at (1.2,-1.5,1.3) {in $\mathrm{ \bf Claim}~ Y(s)$};


\path[fill=green!5] (0-\gap,0-\gaptt,1.25)--(0-\gap,2.25-\gaptt,1.25)--(0-\gap,2.25-\gaptt,0)--(1-\gap,2.25-\gaptt,0)--(1-\gap,0-\gaptt,-0.2)--(1-\gap,0-\gaptt,1.25)--(0-\gap,0-\gaptt,1.25);
\path[fill=green!5] (0-\gap,1-\gaptt,0)--(2.25-\gap,1-\gaptt,0)--(2.25-\gap,0-\gaptt,0)--(2.25-\gap,0-\gaptt,1.25)--(0-\gap,0-\gaptt,1.25);
\path[fill=green!5] (0-\gap,1-\gaptt,0)--(0-\gap,1-\gaptt,3)--(0-\gap,0-\gaptt,3)--(1-\gap,0-\gaptt,3)--(1-\gap,0-\gaptt,0);

\draw[dotted](1-\gap,1-\gaptt,3)--(1-\gap,2.25-\gaptt,1.25)--(2.25-\gap,1-\gaptt,1.25)--(1-\gap,1-\gaptt,3);
\draw[dotted] (1-\gap,1-\gaptt,3)--(1-\gap,0-\gaptt,3)--(2.25-\gap,0-\gaptt,1.25);
\draw[dotted] (1-\gap,2.25-\gaptt,0)--(2.25-\gap,1-\gaptt,0);
\draw[dotted] (1-\gap,1-\gaptt,3)--(0-\gap,1-\gaptt,3)--(0-\gap,2.25-\gaptt,1.25);
\draw[dash pattern= { on 2pt off 1pt}] (2.25-\gap,0-\gaptt,1.25)--(2.25-\gap,0-\gaptt,0)--(2.25-\gap,1-\gaptt,0)--(2.25-\gap,1-\gaptt,1.25)--(2.25-\gap,0-\gaptt,1.25);
\draw[dash pattern= { on 2pt off 1pt}] (0-\gap,2.25-\gaptt,1.25)--(0-\gap,2.25-\gaptt,0)--(1-\gap,2.25-\gaptt,0)--(1-\gap,2.25-\gaptt,1.25)--(0-\gap,2.25-\gaptt,1.25);
\draw[dash pattern= { on 2pt off 1pt}] (1-\gap,0-\gaptt,3)--(0-\gap,0-\gaptt,3)--(0-\gap,1-\gaptt,3)--(1-\gap,1-\gaptt,3)--(1-\gap,0-\gaptt,3);

\draw[dash pattern= { on 1pt off 1pt}] (0-\gap,0-\gaptt,0)--(2.25-\gap,0-\gaptt,0);
\draw[dash pattern= { on 1pt off 1pt}] (0-\gap,0-\gaptt,0)--(0-\gap,2.25-\gaptt,0);
\draw[dash pattern= { on 1pt off 1pt}] (0-\gap,0-\gaptt,0)--(0-\gap,0-\gaptt,3);
\draw [->] (0-\gap,0-\gaptt,3)--(0-\gap,0-\gaptt,4);
\draw [->] (0-\gap,2.25-\gaptt,0)--(0-\gap,3-\gaptt,0);
\draw [->] (2.25-\gap,0-\gaptt,0)--(3-\gap,0-\gaptt,0);

\node [below] at (3-\gap,0-\gaptt,0) {\tiny$t_1$};
\node [left] at (0-\gap,3-\gaptt,0) {\tiny$t_2$};
\node [below] at (0-\gap,0-\gaptt,4) {\tiny$t_3$};

\node [left] at (0-\gap,2.25-\gaptt,-0.2) {\tiny$(0,1,0)$};
\node [right] at (0.9-\gap,2.25-\gaptt,-0.2) {\tiny$(s,1,0)$};
\node [left] at (0-\gap,2.25-\gaptt,1.1) {\tiny$(0,1,s)$};

\draw[dash pattern= { on 2pt off 1pt}] (0-\gap,1-\gaptt,0)--(2.25-\gap,1-\gaptt,0);
\draw[dash pattern= { on 2pt off 1pt}] (0-\gap,1-\gaptt,1.25)--(2.25-\gap,1-\gaptt,1.25);
\draw[dash pattern= { on 2pt off 1pt}] (0-\gap,0-\gaptt,1.25)--(2.25-\gap,0-\gaptt,1.25);

\draw[dash pattern= { on 2pt off 1pt}] (0-\gap,1-\gaptt,0)--(0-\gap,1-\gaptt,3);
\draw[dash pattern= { on 2pt off 1pt}] (1-\gap,1-\gaptt,0)--(1-\gap,1-\gaptt,3);
\draw[dash pattern= { on 2pt off 1pt}] (1-\gap,0-\gaptt,0)--(1-\gap,0-\gaptt,3);

\draw[dash pattern= { on 2pt off 1pt}] (1-\gap,0-\gaptt,0)--(1-\gap,2.25-\gaptt,0);
\draw[dash pattern= { on 2pt off 1pt}] (1-\gap,0-\gaptt,1.25)--(1-\gap,2.25-\gaptt,1.25);
\draw[dash pattern= { on 2pt off 1pt}] (0-\gap,0-\gaptt,1.25)--(0-\gap,2.25-\gaptt,1.25);

\node [right] at (2.1-\gap,1-\gaptt,-0.2) {\tiny$(1,s,0)$};
\node [right] at (2.1-\gap,0.1-\gaptt,-0.2) {\tiny$(1,0,0)$};
\node [right] at (2.25-\gap,0-\gaptt,1.25) {\tiny$(1,0,s)$};

\node [right] at (1-\gap,0-\gaptt,3) {\tiny$(s,0,1)$};
\node [left] at (0-\gap,0-\gaptt,2.8) {\tiny$(0,0,1)$};
\node [left] at (0-\gap,1-\gaptt,2.8) {\tiny$(0,s,1)$};

\node  at (2.1-\gap,0.7-\gaptt,0.9) {$\mathscr{V}_1^3(s)$};
\node  at (0.8-\gap,1.9-\gaptt,0.9) {$\mathscr{V}^3_2(s)$};
\node  at (0.8-\gap,0.7-\gaptt,2.8) {$\mathscr{V}^3_3(s)$};

\node [below] at (1.2-\gap,-1.5-\gaptt,1.3) {in $\mathrm{ \bf Claim}~ Z(s)$};


\path[fill=green!5] (0,0-\gaptt,3)--(1,0-\gaptt,3)--(2.25,0-\gaptt,1.25)--(2.25,0-\gaptt,0)--(2.25,1-\gaptt,0)--(1,2.25-\gaptt,0)--(0,2.25-\gaptt,0)--(0,2.25-\gaptt,1.25)--(0,1-\gaptt,3)--(0,0-\gaptt,3);

\draw[dash pattern= { on 2pt off 1pt}](1,1-\gaptt,3)--(1,2.25-\gaptt,1.25)--(2.25,1-\gaptt,1.25)--(1,1-\gaptt,3);
\draw[dash pattern= { on 2pt off 1pt}] (1,1-\gaptt,3)--(1,0-\gaptt,3)--(2.25,0-\gaptt,1.25)--(2.25,1-\gaptt,1.25);
\draw[dash pattern= { on 2pt off 1pt}] (1,2.25-\gaptt,1.25)--(1,2.25-\gaptt,0)--(2.25,1-\gaptt,0)--(2.25,1-\gaptt,1.25);
\draw[dash pattern= { on 2pt off 1pt}] (1,1-\gaptt,3)--(0,1-\gaptt,3)--(0,2.25-\gaptt,1.25)--(1,2.25-\gaptt,1.25);
\draw[dash pattern= { on 2pt off 1pt}] (2.25,0-\gaptt,1.25)--(2.25,0-\gaptt,0)--(2.25,1-\gaptt,0);
\draw[dash pattern= { on 2pt off 1pt}] (0,2.25-\gaptt,1.25)--(0,2.25-\gaptt,0)--(1,2.25-\gaptt,0);
\draw[dash pattern= { on 2pt off 1pt}] (1,0-\gaptt,3)--(0,0-\gaptt,3)--(0,1-\gaptt,3);

\draw[dash pattern= { on 1pt off 1pt}] (0,0-\gaptt,0)--(2.25,0-\gaptt,0);
\draw[dash pattern= { on 1pt off 1pt}] (0,0-\gaptt,0)--(0,2.25-\gaptt,0);
\draw[dash pattern= { on 1pt off 1pt}] (0,0-\gaptt,0)--(0,0-\gaptt,3);
\draw [->] (0,0-\gaptt,3)--(0,0-\gaptt,4);
\draw [->] (0,2.25-\gaptt,0)--(0,3-\gaptt,0);
\draw [->] (2.25,0-\gaptt,0)--(3,0-\gaptt,0);

\node [below] at (3,0-\gaptt,0) {\tiny$t_1$};
\node [left] at (0,3-\gaptt,0) {\tiny$t_2$};
\node [below] at (0,0-\gaptt,4) {\tiny$t_3$};

\node [left] at (0,2.25-\gaptt,0) {\tiny$(0,1,0)$};
\node [right] at (1,2.25-\gaptt,0) {\tiny$(s,1,0)$};
\node [left] at (0,2.25-\gaptt,1.1) {\tiny$(0,1,s)$};
\node [right] at (1,2.25-\gaptt,1.2) {\tiny$(s,1,s)$};

\node [right] at (2.15,1.1-\gaptt,0) {\tiny$(1,s,0)$};
\node [right] at (2.15,0.2-\gaptt,0) {\tiny$(1,0,0)$};
\node [right] at (2.1,-0.1-\gaptt,1.25) {\tiny$(1,0,s)$};

\node [right] at (1,0-\gaptt,3) {\tiny$(s,0,1)$};
\node [left] at (0.1,0-\gaptt,3) {\tiny$(0,0,1)$};
\node [left] at (0.1,1-\gaptt,3) {\tiny$(0,s,1)$};

\node  at (1.2,1-\gaptt,1.5) {$\HH^3(s)$};

\node [below] at (1.2,-1.5-\gaptt,1.3) {in $\mathrm{ \bf Claim}~ \Sigma(s)$};

\end{tikzpicture}
\caption{The trilinear case $m=3$ : the range of $(\frac{1}{p_1},\frac{1}{p_2},\frac{1}{p_3})$}\label{fig1}
\end{figure}
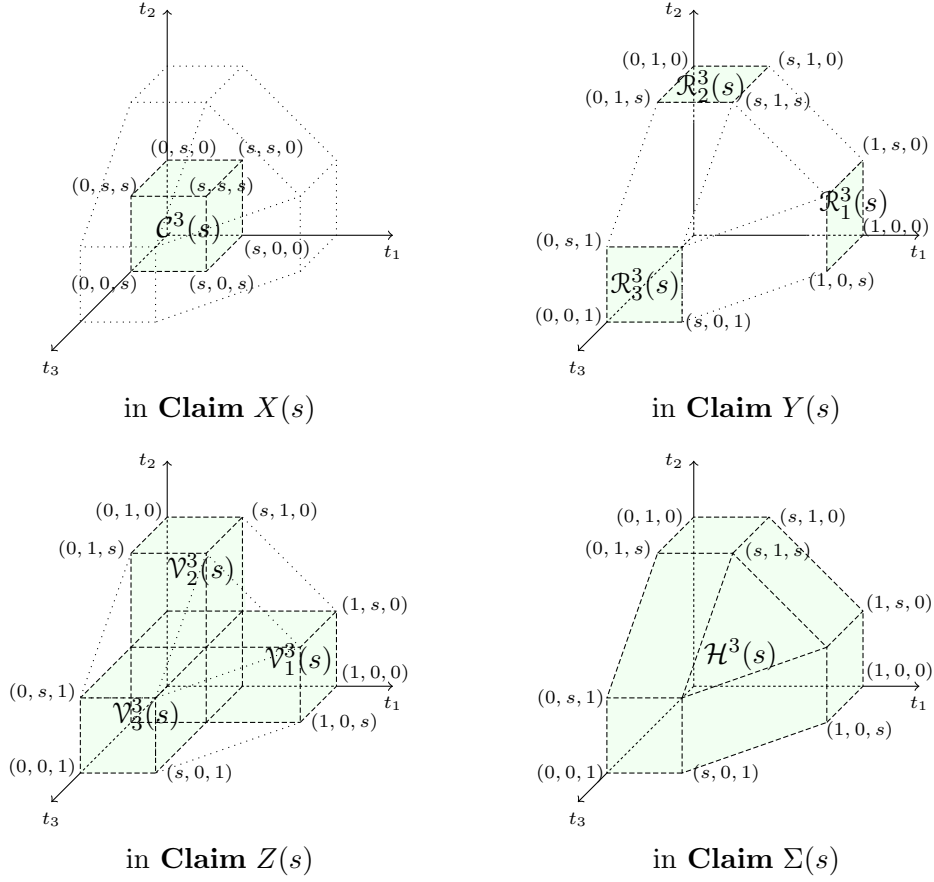
Then we will carry out induction arguments through the following proposition.
\begin{proposition}\label{inductionpropo}
Let $0< s<1$.
Then we have
$$ \mathrm{ \bf Claim}~ X(s) \Rightarrow  \mathrm{ \bf Claims}~X(s)\text{ and } Y(s) \Rightarrow \mathrm{ \bf Claim}~ Z(s) \Rightarrow \mathrm{ \bf Claim}~ \Sigma(s).$$
\end{proposition}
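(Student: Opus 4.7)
The plan is to establish the three implications in order. The first is a genuine endpoint argument via a Calder\'on--Zygmund decomposition, while the latter two are multi-sublinear Marcinkiewicz interpolations (Lemma~\ref{interpollemma}).

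For $X(s)\Rightarrow Y(s)$, fix $l\in J_m$ and $(\frac{1}{p_1},\dots,\frac{1}{p_m})\in \mathscr{R}^m_l(s)$, so $p_l=1$ and $0\le\frac{1}{p_j}<s$ for $j\neq l$, which forces $\frac{1}{m}<p<1$. The plan is to apply a Calder\'on--Zygmund decomposition to the $L^1$ function $f_l$ at a suitable height $\lambda$, writing $f_l=g+b$ with $g\in L^r$ for some $r$ slightly larger than $1/s$ (so that $\frac{1}{r}<s$), and $b=\sum_Q b_Q$ supported on disjoint dyadic cubes $Q$ with $\int_Q b_Q\,dx=0$ and $\sum_Q|Q|\lesssim\lambda^{-1}\Vert f_l\Vert_{L^1}$. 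The good-part contribution $\LL^\sharp_{\Om,\mu}(f_1,\dots,g,\dots,f_m)$ is estimated via Claim~$X(s)$ applied to the modified exponent tuple $(\frac{1}{p_1},\dots,\frac{1}{r},\dots,\frac{1}{p_m})\in\mathcal{C}^m(s)$ (with $\frac{1}{r}$ in the $l$-th slot). For the bad part one removes an exceptional set (a fixed dilate of $\bigcup_Q Q$) of controlled measure, and on its complement exploits the cancellation $\int b_Q\,dx=0$ together with the Fourier-support properties of $K^\gamma_\mu$ recorded in \eqref{kmuhatest} to sum the resulting gains in $\gamma$. The $2^{\epsilon\mu}$ loss from Claim~$X(s)$ persists through the argument, yielding the weak $L^p$ bound of Claim~$Y(s)$.

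For $X(s)\wedge Y(s)\Rightarrow Z(s)$, given $(\frac{1}{p_1},\dots,\frac{1}{p_m})\in \mathscr{V}^m_l(s)$, I realise the point as an interior point of a non-degenerate open $m$-simplex in $\mathbb{R}^m$ whose $m+1$ vertices lie in regions covered by Claims~$X(s)$ and~$Y(s)$: take $m$ affinely independent vertices inside $\mathcal{C}^m(s)$ (obtained by slightly perturbing the target in the $j\neq l$ coordinates, keeping the $l$-th coordinate strictly below $s$) together with one vertex in $\mathscr{R}^m_l(s)$ whose $l$-th coordinate equals $1$. Claim~$X(s)$ supplies strong $L^{p^{(\iota)}}$ bounds at the first $m$ vertices (hence weak bounds as well), while Claim~$Y(s)$ supplies a weak $L^{p^{(m)}}$ bound at the last vertex, all with the common factor $C_\epsilon 2^{\epsilon\mu}\Vert\Om\Vert_{L^{1/(1-s)}(\mathbb{S}^{mn-1})}$; Lemma~\ref{interpollemma} (the strong-type conclusion, valid because the simplex has non-empty interior) then delivers a strong $L^p$ bound at the interior point with the same constant. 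The implication $Z(s)\Rightarrow\Sigma(s)$ is analogous: by \eqref{convexhull} any $(\frac{1}{p_1},\dots,\frac{1}{p_m})\in\mathcal{H}^m(s)$ lies in the interior of a non-degenerate $m$-simplex whose vertices can be chosen in $\bigcup_{l=1}^m\mathscr{V}^m_l(s)$, and one again invokes Lemma~\ref{interpollemma} with the strong-type inputs supplied by Claim~$Z(s)$.

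The main obstacle is the first implication: it requires a genuinely multilinear Calder\'on--Zygmund decomposition adapted to the maximal rough singular integral $\LL^\sharp_{\Om,\mu}$, with careful bookkeeping of both the $L^{1/(1-s)}$ dependence on $\Om$ and the $2^{\epsilon\mu}$ growth in $\mu$; the maximal character of the operator, together with the fact that $\Omega$ is not assumed to be bounded, obstructs a direct reduction to the singular-integral case. Once Claim~$Y(s)$ is in place, the other two implications reduce to routine applications of the multi-sublinear Marcinkiewicz theorem, provided the interpolation simplices are chosen correctly.
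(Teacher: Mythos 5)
The overall skeleton — a Calder\'on--Zygmund decomposition for $X(s)\Rightarrow Y(s)$ followed by interpolation for the other two implications — is the same as the paper's, and the last two implications are correctly handled. The paper carries out $X(s)\wedge Y(s)\Rightarrow Z(s)$ by $m$ separate one-parameter Marcinkiewicz interpolations (for each $l\in J_m$, fix the $m-1$ coordinates $1/p_j$, $j\neq l$, and interpolate in the $l$-th slot between a point of $\mathcal{C}^m(s)$ and the corresponding point of $\mathscr{R}^m_l(s)$); your single multi-sublinear Lemma~\ref{interpollemma} interpolation over an $m$-simplex with $m$ vertices in $\mathcal{C}^m(s)$ and one in $\mathscr{R}^m_l(s)$ is a valid alternative, and $Z(s)\Rightarrow\Sigma(s)$ via \eqref{convexhull} matches the paper exactly.

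The gap is in the bad-part estimate in $X(s)\Rightarrow Y(s)$, which is the only genuinely nontrivial piece of the proposition. First a bookkeeping point: the decomposition must be performed at height $\lambda^p$ (not $\lambda$) so that, after applying Chebyshev at exponent $\widetilde p$ and combining with the good-part estimate $\Vert g_1\Vert_{L^{p_0}}\lesssim\lambda^{(1-1/p_0)p}$, the exponent arithmetic $\widetilde p\bigl((1-1/p_0)p-1\bigr)=-p$ produces the target $\lambda^{-p}$. More importantly, your sketch for the bad part invokes \eqref{kmuhatest} to ``sum the resulting gains in $\gamma$''; but \eqref{kmuhatest} is an estimate valid for $\mu\le 0$, whereas here $\mu>0$, so that citation cannot do the work. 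After removing the exceptional set $\bigcup_Q Q^*$ and using Chebyshev, one must bound
\begin{equation*}
\frac{1}{\lambda^p}\int_{(\bigcup_{Q\in\mathcal{A}}Q^*)^c}\bigg( \sum_{Q\in\mathcal{A}}\sum_{\gamma\in\bbz} \big|T_{K_{\mu}^{\ga}}\big(b_{1,Q},f_2,\dots,f_m \big)(x) \big|\bigg)^{p} dx \lesssim \frac{2^{\epsilon\mu p}}{\lambda^{p}},
\end{equation*}
and this is exactly the content of (6.16) in \cite{Gr_He_Ho_Park_JLMS}, which the paper cites. Your proposal correctly identifies that a ``genuinely multilinear Calder\'on--Zygmund decomposition with bookkeeping of the $L^{1/(1-s)}$ and $2^{\epsilon\mu}$ dependences'' is the obstacle, but it neither proves this estimate nor points to where it is proved, and the one tool it does name is inapplicable in the regime $\mu>0$. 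Note also that the supremum over $\tau$ in $\LL^\sharp_{\Om,\mu}$ should be crushed by the full $\ell^1_\gamma$ sum before invoking the cited estimate, since that estimate is stated for the non-maximal object.
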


\hfill

Let us temporarily take Proposition \ref{inductionpropo} for granted and complete the proof of Proposition \ref{hsmainestpropo}.

We first consider the case $0<s<\frac{1}{m}$. 
In this case,  if $(\frac{1}{p_1},\cdots,\frac{1}{p_m})\in \mathcal{C}^m(s)$,
then Proposition \ref{keyproposharp} yields
\begin{equation*}
\big\| \LL_{\Om,\mu}^{\sharp}(f_1,\dots,f_m )\big\|_{L^{p}(\bbrn)}\lesssim_M \mu^{M} \|\Om\|_{L^1(\mathbb S^{mn-1})}
\prod_{j=1}^{m}\|f_j\|_{L^{p_j}(\bbrn)}.
\end{equation*}
Since  $$\Vert \Omega\Vert_{L^1(\mathbb{S}^{mn-1})}\lesssim \Vert \Omega\Vert_{L^{\frac{1}{1-s}}(\mathbb{S}^{mn-1})}$$
 and  for any $\epsilon>0$
$$\mu^M\lesssim_{\epsilon,M}2^{\epsilon \mu}, \qq \mu\in\bbn,$$
 $\mathrm{\bf Claim}~X(s)$ holds.
Then Proposition \ref{inductionpropo} deduces \eqref{keypropoesttri}, as desired.

Now assume $\frac{1}{m}\le s< 1$.
For $\nu\in \bbn$, let
$$a_{\nu}:=1-\Big(1-\frac{1}{m} \Big)^{\nu}.$$
Then we observe that $(a_{\nu+1},\dots,a_{\nu+1})\in \bbr^m$ is the center of the $(m-1)$ simplex with $m$ vertices 
$(1,a_{\nu},a_{\nu},\dots,a_{\nu})$, $(a_{\nu},1,a_{\nu},\dots, a_{\nu})$, $\dots$, $(a_{\nu},\dots, a_{\nu},1,a_{\nu})$, and $(a_{\nu},\dots, a_{\nu},a_{\nu},1)$.
The trilinear case ($m=3$) is illustrated in Figure \ref{fig0}.     
 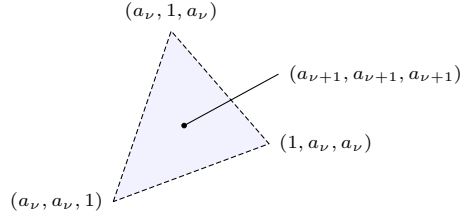
\begin{figure}[h]
\begin{tikzpicture}

\path[fill=blue!5] (1.7,1.5,4)--(1.7,3,2)--(3,1.5,2)--(1.7,1.5,4);

\draw[dash pattern= { on 2pt off 1pt}] (1.7,1.5,4)--(1.7,3,2)--(3,1.5,2)--(1.7,1.5,4);

\filldraw[fill=black] (2.13,2,2.67)  circle[radius=0.3mm];
\draw [-] (2.13,2,2.67)--(3,2.3,1.67);
\node [right] at (3,2.3,1.67) {\tiny$(a_{\nu+1},a_{\nu+1},a_{\nu+1})$};

\node [right] at (3,1.5,2) {\tiny$(1,a_{\nu},a_{\nu})$};
\node [above] at (1.7,3,2) {\tiny$(a_{\nu},1,a_{\nu})$};
\node [left] at (1.7,1.5,4) {\tiny$(a_{\nu},a_{\nu},1)$};

\end{tikzpicture}
\caption{$(a_{\nu+1},a_{\nu+1},a_{\nu+1})$ when $m=3$}\label{fig0}
\end{figure}
We notice that $a_1=\frac{1}{m}$, 
$a_{\nu+1}=\frac{a_{\nu}(m-1)+1}{m}$ for $\nu\ge 1$,   and $a_{\nu}\nearrow 1$ as $\nu \to \infty$. 
Moreover,  by the definition of $\HH^m(a_{\nu})$  we have
$$\mathcal{C}^m(a_{\nu+1})\subset \HH^m(a_{\nu}) \q \text{ for all }~\nu\in \bbn,$$
 see Figure \ref{fig5},
which implies
\begin{equation}\label{zanutoxanu1}
 \mathrm{ \bf Claim}~ \Sigma(a_{\nu})\Rightarrow \mathrm{ \bf Claim}~ X(a_{\nu+1})  \q \text{ for all }~\nu\in \bbn
 \end{equation}
 as $L^{\frac{1}{1-a_{\nu+1}}}(\mathbb{S}^{mn-1})\hookrightarrow L^{\frac{1}{1-a_{\nu}}}(\mathbb{S}^{mn-1})$.
  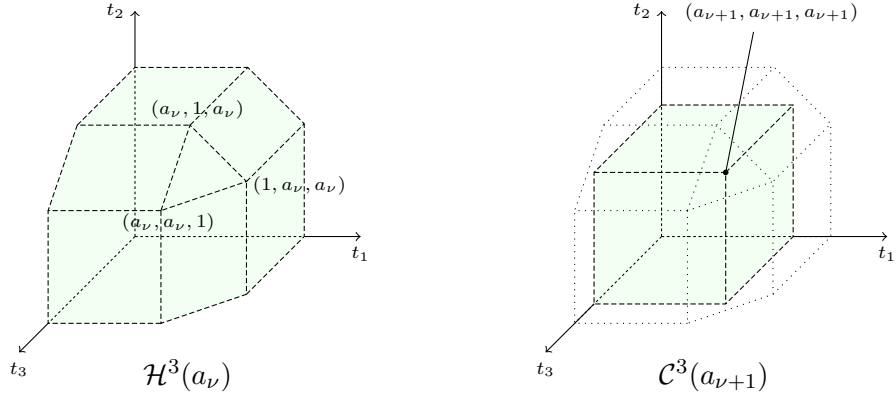
\begin{figure}[h]
\begin{tikzpicture}

\path[fill=green!5] (0+\gapt,0,3)--(1.5+\gapt,0,3)--(2.25+\gapt,0,1.995)--(2.25+\gapt,0,0)--(2.25+\gapt,1.5,0)--(1.5+\gapt,2.25,0)--(0+\gapt,2.25,0)--(0+\gapt,2.25,1.995)--(0+\gapt,1.5,3)--(0+\gapt,0,3);

\draw[dash pattern= { on 2pt off 1pt}](1.5+\gapt,1.5,3)--(1.5+\gapt,2.25,1.995)--(2.25+\gapt,1.5,1.995)--(1.5+\gapt,1.5,3);
\draw[dash pattern= { on 2pt off 1pt}] (1.5+\gapt,1.5,3)--(1.5+\gapt,0,3)--(2.25+\gapt,0,1.995)--(2.25+\gapt,1.5,1.995);
\draw[dash pattern= { on 2pt off 1pt}] (1.5+\gapt,2.25,1.995)--(1.5+\gapt,2.25,0)--(2.25+\gapt,1.5,0)--(2.25+\gapt,1.5,1.995);
\draw[dash pattern= { on 2pt off 1pt}] (1.5+\gapt,1.5,3)--(0+\gapt,1.5,3)--(0+\gapt,2.25,1.995)--(1.5+\gapt,2.25,1.995);
\draw[dash pattern= { on 2pt off 1pt}] (2.25+\gapt,0,1.995)--(2.25+\gapt,0,0)--(2.25+\gapt,1.5,0);
\draw[dash pattern= { on 2pt off 1pt}] (0+\gapt,2.25,1.995)--(0+\gapt,2.25,0)--(1.5+\gapt,2.25,0);
\draw[dash pattern= { on 2pt off 1pt}] (1.5+\gapt,0,3)--(0+\gapt,0,3)--(0+\gapt,1.5,3);

\node [right] at (2.2+\gapt,1.45,1.995) {\tiny$(1,a_{\nu},a_{\nu})$};
\node [above] at (1.6+\gapt,2.2,2) {\tiny$(a_{\nu},1,a_{\nu})$};
\node [below] at (1.6+\gapt,1.6,3) {\tiny$(a_{\nu},a_{\nu},1)$};

\draw[dash pattern= { on 1pt off 1pt}] (0+\gapt,0,0)--(2.25+\gapt,0,0);
\draw[dash pattern= { on 1pt off 1pt}] (0+\gapt,0,0)--(0+\gapt,2.25,0);
\draw[dash pattern= { on 1pt off 1pt}] (0+\gapt,0,0)--(0+\gapt,0,3);
\draw [->] (0+\gapt,0,3)--(0+\gapt,0,4);
\draw [->] (0+\gapt,2.25,0)--(0+\gapt,3,0);
\draw [->] (2.25+\gapt,0,0)--(3+\gapt,0,0);

\node [below] at (3+\gapt,0,0) {\tiny$t_1$};
\node [left] at (0+\gapt,3,0) {\tiny$t_2$};
\node [below] at (0+\gapt,0,4) {\tiny$t_3$};

\node [below] at (1.2+\gapt,-1,1.3) {$\HH^3(a_{\nu})$};


\path[fill=green!5] (0+\gapt+\gapt,0,2.33)--(1.75+\gapt+\gapt,0,2.33)--(1.75+\gapt+\gapt,1.75,2.33)--(0+\gapt+\gapt,1.75,2.33)--(0+\gapt+\gapt,0,2.33);
\path[fill=green!5] (1.75+\gapt+\gapt,0,2.33)--(1.75+\gapt+\gapt,0,0)--(1.75+\gapt+\gapt,1.75,0)--(1.75+\gapt+\gapt,1.75,2.33)--(1.75+\gapt+\gapt,0,2.33);
\path[fill=green!5] (1.75+\gapt+\gapt,1.75,0)--(0+\gapt+\gapt,1.75,0)--(0+\gapt+\gapt,1.75,2.33)--(1.75+\gapt+\gapt,1.75,2.33)--(1.75+\gapt+\gapt,1.75,0);

\draw[dash pattern= { on 2pt off 1pt}] (0+\gapt+\gapt,1.75,0)--(1.75+\gapt+\gapt,1.75,0)--(1.75+\gapt+\gapt,0,0)--(1.75+\gapt+\gapt,0,2.33)--(0+\gapt+\gapt,0,2.33)--(0+\gapt+\gapt,1.75,2.33)--(0+\gapt+\gapt,1.75,0);
\draw[dash pattern= { on 2pt off 1pt}] (0+\gapt+\gapt,1.75,2.33)--(1.75+\gapt+\gapt,1.75,2.33)--(1.75+\gapt+\gapt,0,2.33);
\draw[dash pattern= { on 2pt off 1pt}] (1.75+\gapt+\gapt,1.75,2.33)--(1.75+\gapt+\gapt,1.75,0);

\draw[dotted] (1.5+\gapt+\gapt,1.5,3)--(1.5+\gapt+\gapt,2.25,1.995)--(2.25+\gapt+\gapt,1.5,1.995)--(1.5+\gapt+\gapt,1.5,3);
\draw[dotted] (1.5+\gapt+\gapt,1.5,3)--(1.5+\gapt+\gapt,0,3)--(2.25+\gapt+\gapt,0,1.995)--(2.25+\gapt+\gapt,1.5,1.995);
\draw[dotted] (1.5+\gapt+\gapt,2.25,1.995)--(1.5+\gapt+\gapt,2.25,0)--(2.25+\gapt+\gapt,1.5,0)--(2.25+\gapt+\gapt,1.5,1.995);
\draw[dotted](1.5+\gapt+\gapt,1.5,3)--(0+\gapt+\gapt,1.5,3)--(0+\gapt+\gapt,2.25,1.995)--(1.5+\gapt+\gapt,2.25,1.995);
\draw[dotted] (2.25+\gapt+\gapt,0,1.995)--(2.25+\gapt+\gapt,0,0)--(2.25+\gapt+\gapt,1.5,0);
\draw[dotted] (0+\gapt+\gapt,2.25,1.995)--(0+\gapt+\gapt,2.25,0)--(1.5+\gapt+\gapt,2.25,0);
\draw[dotted] (1.5+\gapt+\gapt,0,3)--(0+\gapt+\gapt,0,3)--(0+\gapt+\gapt,1.5,3);

\draw[dash pattern= { on 1pt off 1pt}] (0+\gapt+\gapt,0,0)--(2.25+\gapt+\gapt,0,0);
\draw[dash pattern= { on 1pt off 1pt}] (0+\gapt+\gapt,0,0)--(0+\gapt+\gapt,2.25,0);
\draw[dash pattern= { on 1pt off 1pt}] (0+\gapt+\gapt,0,0)--(0+\gapt+\gapt,0,3);
\draw [->] (0+\gapt+\gapt,0,2.33)--(0+\gapt+\gapt,0,4);
\draw [->] (0+\gapt+\gapt,1.75,0)--(0+\gapt+\gapt,3,0);
\draw [->] (1.75+\gapt+\gapt,0,0)--(3+\gapt+\gapt,0,0);

\node [below] at (3+\gapt+\gapt,0,0) {\tiny$t_1$};
\node [left] at (0+\gapt+\gapt,3,0) {\tiny$t_2$};
\node [below] at (0+\gapt+\gapt,0,4) {\tiny$t_3$};

\filldraw[fill=black] (1.75+\gapt+\gapt,1.75,2.33)  circle[radius=0.3mm];
\draw [-] (1.75+\gapt+\gapt,1.75,2.33)--(1.05+\gapt+\gapt,2.55,-0.45);
\node [above right] at (0+\gapt+\gapt,2.55,-0.45) {\tiny$(a_{\nu+1},a_{\nu+1},a_{\nu+1})$};

\node [below] at (1.2+\gapt+\gapt,-1,1.3) {$\mathcal{C}^3(a_{\nu+1})$};

\end{tikzpicture}
\caption{The trilinear case $m=3$ : $\HH^3(a_{\nu})$ and $\mathcal{C}^3(a_{\nu+1})$}\label{fig5}
\end{figure}
Now $\mathrm{\bf Claim} ~X(a_1)$ holds due to Proposition \ref{keyproposharp}, and accordingly, $\mathrm{\bf Claim} ~\Sigma(a_{\nu})$ should be also true for all $\nu\in\bbn$ with the aid of Proposition \ref{inductionpropo} and (\ref{zanutoxanu1}).
When $s=\frac{1}{m}~(=a_1)$, the asserted estimate \eqref{keypropoesttri} is exactly $\mathrm{ \bf Claim}~ \Sigma(a_{1})$.
If $a_{\nu}<s\le a_{\nu+1}$ for some $\nu\in\bbn$, then $\mathcal{C}^m(s)\subset \mathcal{H}^m(a_{\nu})$, and this yields that
$ \mathrm{ \bf Claim}~ X(s)$ holds since $L^{\frac{1}{1-s}}(\mathbb{S}^{mn-1})\hookrightarrow L^{\frac{1}{1-a_{\nu}}}(\mathbb{S}^{mn-1})$.
Finally, Proposition \ref{inductionpropo} shows that $ \mathrm{ \bf Claim}~ \Sigma (s)$ works. This finishes the proof of Proposition \ref{hsmainestpropo}.\\

In the rest of this section, we will prove Proposition \ref{inductionpropo}.
\begin{proof}[Proof of Proposition \ref{inductionpropo}]
Let $0<s<1$.
We first note that 
the direction $$\mathrm{ \bf Claims}~ X(s) \text{ and } Y(s)\Rightarrow \mathrm{\bf Claim}~Z(s)$$
follows from the (sublinear) Marcinkiewicz interpolation method. Here, we apply the interpolation separately $m$ times and in each interpolation, $m-1$ parameters among $p_1,\dots,p_m$ are fixed.
Moreover, the direction 
$$\mathrm{\bf Claim}~Z(s) \Rightarrow \mathrm{\bf Claim}~\Sigma(s)$$
also holds due to  Lemma \ref{interpollemma} and the geometric property \eqref{convexhull}.
Therefore it remains to show the direction
$\mathrm{ \bf Claim}~ X(s) \Rightarrow  \mathrm{ \bf Claim}~ Y(s)$.
For this one, we deal with only the case $(\frac{1}{p_1},\dots,\frac{1}{p_m})\in\mathscr{R}_1^m(s)$, appealing to symmetry for other cases.
 Assume that $p_1=1$, $\frac{1}{s}<p_2,\dots,p_m< \infty$, and 
 \begin{equation*}
 1+\frac{1}{p_2}+\cdots+\frac{1}{p_m}=\frac{1}{p}.
 \end{equation*}
Without loss of generality, we may also assume 
$$\Vert f_1\Vert_{L^1(\bbrn)}=\Vert f_2\Vert_{L^{p_2}(\bbrn)}=\cdots=\Vert f_m\Vert_{L^{p_m}(\bbrn)}=\Vert \Omega\Vert_{L^{\frac{1}{1-s}}(\mathbb{S}^{mn-1})}=1$$ and then it suffices to prove that for any $\epsilon>0$
\begin{equation}\label{weakmainest}
\Big|\Big\{    x\in \bbrn:     \LL_{\Om,\mu}^{\sharp}(f_1,\dots,f_m)(x) >\lambda    \Big\}\Big|\lesssim_{\epsilon} 2^{\epsilon \mu  p }\frac{1}{\lambda^p}.
\end{equation}
 Using the Calder\'on-Zygmund decomposition of $f_1$ at height $\lambda^p$, we write $f_1$  as
 $$f_1=g_1+\sum_{Q\in\mathcal{A}}{b_{1,Q}}$$
 where $\mathcal{A}$ is a subset of disjoint dyadic cubes, $\big| \bigcup_{Q\in \mathcal{A}}Q\big|\lesssim \frac{1}{\lambda^p}$, $\supp(b_{1,Q})\subset Q$, $\int{b_{1,Q}(y)}dy=0$, $\Vert b_{1,Q}\Vert_{L^1(\bbrn)}\lesssim \lambda^{p}|Q|$, and $\Vert g_1\Vert_{L^r(\bbrn)}\lesssim \lambda^{(1-\frac{1}{r})p}$ for all $1\le r\le \infty$.
Then the left-hand side of (\ref{weakmainest}) is controlled by the sum of 
\begin{equation*}
\Xi_1^{\mu}:=\Big|\Big\{    x\in \bbrn:    \big|\LL_{\Om,\mu}^{\sharp}(g_1,f_2,\dots,f_m)(x)\big|>\frac{\lambda}{2}    \Big\}\Big|
\end{equation*}
and
\begin{equation*}
\Xi_2^{\mu}:=\bigg|\bigg\{    x\in \bbrn:    \Big|\LL_{\Om,\mu}^{\sharp}\Big(\sum_{Q\in\mathcal{A}}{b_{1,Q}},f_2,\dots,f_m\Big)(x)\Big|>\frac{\lambda}{2}    \bigg\}\bigg|.
\end{equation*}

In order to estimate $\Xi_1^{\mu}$, we choose $\frac{1}{s}<p_0<\infty$ and $\wt{p}>p$ satisfying
\begin{equation*}
\frac{1}{p_0}+\frac{1}{p_2}+\dots+\frac{1}{p_m}=\frac{1}{\wt{p}}
\end{equation*}
and set $\epsilon_0:=\frac{\epsilon p}{\wt{p}}$ so that $0<\epsilon_0<1$.
Then it follows from the hypothesis $\mathrm{ \bf Claim}~ X(s)$ that
\begin{equation}\label{claimxnu}
\big\Vert \LL_{\Om,\mu}^{\sharp}(g_1,f_2,\dots,f_m)\big\Vert_{L^{\wt{p}}(\bbrn)}\lesssim_{\epsilon_0}2^{\epsilon_0 \mu} \Vert g_1\Vert_{L^{p_0}(\bbrn)}\lesssim 2^{\epsilon_0\mu}\lambda^{(1-\frac{1}{p_0})p}.
\end{equation}
Now, Chebyshev's inequality and the estimate (\ref{claimxnu}) yield
\begin{align*}
\Xi_1^{\mu}\lesssim \frac{1}{\lambda^{\wt{p}}} \big\Vert \LL_{\Om,\mu}^{\sharp}(g_1,f_2,\dots,f_m)\big\Vert_{L^{\wt{p}}(\bbrn)}^{\wt{p}}\lesssim 2^{\epsilon_0 \mu \wt{p}}\lambda^{\wt{p}( (1-\frac{1}{p_0})p-1)}=2^{\epsilon \mu p}\frac{1}{\lambda^{p}},
\end{align*}
as desired. Here, we note that $\frac{1}{\wt{p}}-\frac{1}{p_0}=\frac{1}{p}-1$, which implies $\wt{p}( (1-\frac{1}{p_0})p-1)=-p$.

On the other hand, the term $\Xi_2^{\mu}$ is bounded by the sum of $\big| \bigcup_{Q\in\mathcal{A}}Q^*\big|$ and
 \begin{equation*}
\Gamma_{\mu}:= \bigg| \bigg\{x\in \Big( \bigcup_{Q\in\mathcal{A}}Q^*\Big)^c : \Big|  \LL_{\Om,\mu}^{\sharp}\Big(\sum_{Q\in\mathcal{A}}b_{1,Q},f_2,\dots,f_m\Big)(x)\Big|>\frac{\lambda}{2} \bigg\}\bigg|
 \end{equation*}
 where $Q^*$ is the concentric dilate of $Q$ with $\ell(Q^*)=10^2\sqrt{n}\ell(Q)$.
 Since  $\big| \bigcup_{Q\in\mathcal{A}}Q^*\big|\lesssim \frac{1}{\lambda^{p}}$, the estimate of $\Xi_2^{\mu}$ can be reduced to the inequality
 \begin{equation*}
 \Gamma_{\mu} \lesssim_{\epsilon}2^{\epsilon \mu p}\frac{1}{\lambda^{p}}.
 \end{equation*}
 Indeed, by applying Chebyshev's inequality, we obtain
 \begin{align*}
 \Gamma_{\mu}&\lesssim \frac{1}{\lambda^p}\int_{(\bigcup_{Q\in\mathcal{A}}Q^*)^c}  \sup_{\tau\in\bbz}\bigg| \sum_{\gamma<\tau} \sum_{Q\in\mathcal{A}}T_{K_{\mu}^{\gamma}}\big(b_{1,Q},f_2,\dots,f_m \big)(x) \bigg|^p               \; dx\\
 &\le  \frac{1}{\lambda^p}\int_{(\bigcup_{Q\in\mathcal{A}}Q^*)^c}\bigg( \sum_{Q\in\mathcal{A}}\sum_{\gamma\in\bbz} \big|T_{K_{\mu}^{\ga}}\big(b_{1,Q},f_2,\dots,f_m \big)(x) \big|\bigg)^{p} \; dx.
 \end{align*}
 Then it is already proved in \cite[(6.16)]{Gr_He_Ho_Park_JLMS} that the last expression is bounded by a constant times
 $$\frac{1}{\lambda^p}2^{\epsilon \mu p},$$
 which completes the proof of \eqref{weakmainest}.
\end{proof}

\hfill


\appendix

\section{Proof of Theorem \ref{maincor}}

Assume that $1<p_1,\dots,p_m<\infty$, $f_j\in L^{p_j}(\bbrn)$, $j=1,\dots,m$, and $\Omega\in L^q(\bbrn)$ for $1<q<\infty$ satisfying \eqref{qcondequi}, which clearly implies $\frac{1}{p_1}+\cdots+\frac{1}{p_m}=\frac{1}{p}<\frac{1}{q}+\frac{m}{q'}$.
According to Lemma \ref{maxomeest}, there exists a measure zero set $E^{\Omega}_{f_1,\dots,f_m}$ such that
\begin{equation}\label{momptest}
\mathcal{M}_{\Omega}\big(f_1,\dots,f_m\big)(x)<\infty, \qq x\in \bbrn\setminus E^{\Omega}_{f_1,\dots,f_m}.
\end{equation}
Since 
\begin{equation}\label{abininest}
 \int_{\ep_0 \le |\yyy| \le \ep_0^{-1}} \frac{|\Om (\yyy')|}{|\yyy|^{mn}}\,  \prod_{j=1}^{m}\big|f_j(x-y_j)\big|\; d\yyy \lesssim
\f{1 }{(\ep_0)^{2mn}} \mathcal M_{\Om} \big(f_1,\dots , f_m\big)(x), \q 0<\epsilon_0<1,    
\end{equation}
\eqref{momptest} yields $$\LL_{\Omega}^{*,\epsilon_0}\big(f_1,\dots,f_m\big)(x):=\sup_{\epsilon\ge \epsilon_0}\big|\LL_{\Omega}^{(\epsilon,\epsilon^{-1})}(f_1,\dots,f_m)(x) \big|$$
is finite for $x\in \bbrn\setminus E_{f_1,\dots,f_m}^{\Omega}$.
Obviously, $\LL_{\Omega}^{(\epsilon,\epsilon^{-1})}(f_1,\dots,f_m)$ is also well-defined on $ \bbrn\setminus E^{\Omega}_{f_1,\dots,f_m}$. 
For each $j=1,\dots,m$, we choose sequences $\{f_j^k\}_{k\in\bbn}$ of Schwartz functions such that $f_j^k$ converges to $f_j$ in $L^{p_j}(\bbrn)$ as $k\to \infty$.
Then applying Lemma \ref{maxomeest} many times, we may choose measure zero sets $E^{\Omega}_{f_{1}^k,\dots,f_m^k}$  on which 
$\mathcal{M}_{\Omega}\big(f_1^k,\dots,f_m^k\big)(x)$ is finite, and $E^{\Omega}_{f_1^k,\dots,f_{j-1}^k,f_j-f_j^k,f_{j+1},\dots,f_m}$ on which
$\mathcal{M}_{\Omega}\big(f_1^k,\dots,f_{j-1}^k,f_j-f_j^k,f_{j+1},\dots,f_m\big)(x)$ is finite.
Then, using \eqref{abininest}, we have
\begin{align*}
&\LL_{\Omega}^{*,\epsilon_0}(f_1,\dots,f_m)(x)\\
&\le 2\LL_{\Omega}^*\big(f_1^k,\dots,f_m^k\big)(x)+\sum_{j=1}^{m}\LL_{\Omega}^{*,\epsilon_0}\big(   f_1^k,\dots,f_{j-1}^k,f_j-f_j^k,f_{j+1},\dots,f_m     \big)(x)\\
&\lesssim \LL_{\Omega}^*\big(f_1^k,\dots,f_m^k\big)(x)+\frac{1}{(\epsilon_0)^{2mn}}\sum_{j=1}^{m}\mathcal{M}_{\Omega}\big(   f_1^k,\dots,f_{j-1}^k,f_j-f_j^k,f_{j+1},\dots,f_m     \big)(x)
\end{align*}
(with the usual modification when $j=1$ or $j=m$)
for any $0<\epsilon_0<1$ and  $x\in \bbrn\setminus E^{\Omega}$ , where 
\begin{equation}\label{eomegadef}
E^{\Omega}:=E^{\Omega}_{f_1,\dots,f_m}\cup \bigg( \bigcup_{k=1}^{\infty}E_{f_1^k,\dots,f_m^k}^{\Omega}\bigg)\cup\bigg( \bigcup_{j=1}^{m}\bigcup_{k=1}^{\infty}E_{f_1^k,\dots,f_{j-1}^k,f_j-f_j^k,f_{j+1},\dots,f_m}^{\Omega}  \bigg)
\end{equation}
which is also a set of measure zero.
Taking the $L^p$ (quasi-)norm on both sides and applying Theorem \ref{maintheorem} for the first term and Lemma \ref{maxomeest} for the other terms, it follows that
\begin{align*}
&\big\Vert \LL_{\Omega}^{*,\epsilon_0}(f_1,\dots,f_m)\big\Vert_{L^p(\bbrn)}\lesssim \Vert \Omega\Vert_{L^q(\mathbb{S}^{mn-1})}\prod_{j=1}^{m}\Vert f_j^k\Vert_{L^{p_j}(\bbrn)}\\
&\q+\frac{\Vert \Omega\Vert_{L^q(\mathbb{S}^{mn-1})}}{(\epsilon_0)^{2mn}}\sum_{j=1}^{m}\Big(\prod_{i=1}^{j-1}\Vert f_i^k\Vert_{L^{p_i}(\bbrn)}\Big) \big\Vert f_j-f_j^k\big\Vert_{L^{p_j}(\bbrn)}\Big(\prod_{i=j+1}^{m} \Vert f_{i}\Vert_{L^{p_{i}}(\bbrn)}\Big)
\end{align*}
and then the second parts vanishes as $k\to \infty$. Consequently, we have
\begin{equation}\label{lstarep0}
\big\Vert \LL_{\Omega}^{*,\epsilon_0}(f_1,\dots,f_m)\big\Vert_{L^p(\bbrn)}\lesssim \Vert \Omega\Vert_{L^q(\mathbb{S}^{mn-1})}\prod_{j=1}^{m}\Vert f_j\Vert_{L^{p_j}(\bbrn)}.
\end{equation}
We now define
$$\LL_{\Omega}^{**}(f_1,\dots,f_m):=\sup_{\epsilon>0}\big| \LL_{\Omega}^{(\epsilon,\epsilon^{-1})}(f_1,\dots,f_m)\big|=\lim_{\epsilon_0\searrow 0}\LL_{\Omega}^{*,\epsilon_0}(f_1,\dots,f_m),$$
which may be infinite. 
Then applying Fatou's lemma to \eqref{lstarep0}, we conclude
\begin{equation}\label{lpjftest}
\big\Vert \LL_{\Omega}^{**}\big(f_1,\dots,f_m\big)\big\Vert_{L^{p}(\bbrn)}\lesssim \Vert \Omega\Vert_{L^q(\mathbb{S}^{mn-1})}\prod_{j=1}^{m}\Vert f_j\Vert_{L^{p_j}(\bbrn)}
\end{equation}
when each $f_j$ belongs to $L^{p_j}(\bbrn)$.

Now let us finish the proof of Theorem \ref{maincor}.
Due to Theorem \ref{knownbdresult}, $\LL_{\Omega}(f_1,\dots,f_m)$ is defined as the $L^{p}$ limit of $\LL_{\Omega}(f_1^k,\dots,f_m^k)$ as $k\to \infty$.
Therefore, we may select a subsequence $\{k_l\}_{l\in\bbn}$ of $\{k\}_{k\in\bbn}$ so that $\LL_{\Omega}(f_1^{k_l},\cdots,f_m^{k_l})\to \LL_{\Omega}(f_1,\dots,f_m)$ pointwise on $\bbrn\setminus \mathscr{E}$ as $l\to \infty$ for some measure zero set $\mathscr{E}$ in $\bbrn$.
Then, setting $E^{\Omega}$ as in \eqref{eomegadef}, for $x\in \bbrn\setminus (E^{\Omega}\cup \mathscr{E})$,
\begin{align*}
&\big|\LL_{\Om}^{(\ep,\ep^{-1})}\big(f_1,\dots, f_m\big)(x) -\LL_{\Om}\big(f_1 ,\dots, f_m \big)(x)\big|\\
&\qq\le  \big|\LL_{\Om}^{(\ep,\ep^{-1})}\big(f_1,\dots, f_m\big)(x) -\LL_{\Om}^{(\ep,\ep^{-1})}\big(f_1^{k_l} ,\dots, f_m^{k_l} \big)(x)\big|\\
& \qq \qq+\big| \LL_{\Om}^{(\ep,\ep^{-1})}\big(f_1^{k_l} ,\dots, f_m^{k_l} \big)(x) - \LL_{\Om} \big(f_1^{k_l} ,\dots, f_m^{k_l} \big)(x) \big|\\
& \qq \qq\qq + \big| \LL_{\Om} \big(f_1^{k_l} ,\dots, f_m^{k_l} \big)(x) -\LL_{\Om}\big(f_1 ,\dots, f_m \big)(x) \big|.
\end{align*}
We first take the $\limsup_{\epsilon\searrow 0}$ on both sides to make the middle term on the right disappear. Then we apply $\liminf_{l\to \infty}$ so that the last term also vanishes. As a consequence, we have
\begin{align*}
&\limsup_{\epsilon\searrow 0}\big|\LL_{\Om}^{(\ep,\ep^{-1})}\big(f_1,\dots, f_m\big)(x) -\LL_{\Om}\big(f_1 ,\dots, f_m \big)(x)\big|\\
&\le  \liminf_{l\to\infty}\,\limsup_{\epsilon\searrow 0}\big|\LL_{\Om}^{(\ep,\ep^{-1})}\big(f_1,\dots, f_m\big)(x) -\LL_{\Om}^{(\ep,\ep^{-1})}\big(f_1^{k_l} ,\dots, f_m^{k_l} \big)(x)\big|\\
&\le \liminf_{l\to\infty}\,\limsup_{\epsilon\searrow 0}\sum_{j=1}^{m}\big| \LL_{\Omega}^{(\epsilon,\epsilon^{-1})}\big(f_1^{k_l},\dots,f_{j-1}^{k_l},f_j-f_j^{k_l},f_{j+1},\dots,f_m\big)(x)\big|\\
&\le  \liminf_{l\to\infty}\sum_{j=1}^{m}\mathcal{L}_{\Omega}^{**}\big(f_1^{k_l},\dots,f_{j-1}^{k_l},f_j-f_j^{k_l},f_{j+1},\dots,f_m\big)(x)
\end{align*}
for $x\in \bbrn\setminus (E^{\Omega}\cup \mathscr{E})$.
Since $E^{\Omega}\cup \mathscr{E}$ has measure zero, for any $\lambda>0$
\begin{align}\label{lastweak11est}
&\Big| \big\{x\in\bbrn: \limsup_{\epsilon\searrow 0}\big|\LL_{\Om}^{(\ep,\ep^{-1})}\big(f_1,\dots, f_m\big)(x) -\LL_{\Om}\big(f_1 ,\dots, f_m \big)(x)\big|>\lambda \big\}\Big|\nonumber\\
& \le \Big|\big\{x\in\bbrn:   \liminf_{l\to\infty}\sum_{j=1}^{m}\mathcal{L}_{\Omega}^{**}\big(f_1^{k_l},\dots,f_{j-1}^{k_l},f_j-f_j^{k_l},f_{j+1},\dots,f_m\big)(x)>\lambda \big\} \Big|\nonumber\\
&\le \frac{1}{\lambda^p}\bigg\Vert   \liminf_{l\to\infty}\sum_{j=1}^{m}\mathcal{L}_{\Omega}^{**}\big(f_1^{k_l},\dots,f_{j-1}^{k_l},f_j-f_j^{k_l},f_{j+1},\dots,f_m\big)     \bigg\Vert_{L^p(\bbrn)}^{p}\nonumber\\
&\lesssim \frac{1}{\lambda^p}\liminf_{l\to\infty} \sum_{j=1}^{m}\big\Vert    \mathcal{L}_{\Omega}^{**}\big(f_1^{k_l},\dots,f_{j-1}^{k_l},f_j-f_j^{k_l},f_{j+1},\dots,f_m\big)     \big\Vert_{L^p(\bbrn)}^{p}
\end{align}
where we applied Chebyshev's inequality and Fatou's lemma.
Applying \eqref{lpjftest} to 
$$\big(f_1^{k_l},\dots,f_{j-1}^{k_l},f_j-f_j^{k_l},f_{j+1},\dots,f_m\big)\in L^{p_1}(\bbrn)\times \cdots\times L^{p_m}(\bbrn), $$
we bound the right-hand side of \eqref{lastweak11est} by 
\begin{align*}
\frac{1}{\lambda^p}\Vert \Omega\Vert_{L^q(\mathbb{S}^{mn-1})} \sum_{j=1}^{m}  \limsup_{l\to \infty} \Big( \prod_{i=1}^{j-1}\Vert f_i^{k_l}\Vert_{L^{p_i}(\bbrn)}^p\Big) \big\Vert f_j-f_j^{k_l}\big\Vert_{L^{p_j}(\bbrn)}^p  \Big( \prod_{i=j+1}^{m}\Vert f_i\Vert_{L^{p_i}(\bbrn)}^p\Big),
\end{align*}
which clearly vanishes.
This completes the proof of Theorem \ref{maincor}.

\hfill


\begin{thebibliography}{99}




\bibitem{Bu_Ho2019}
E. Buri\'ankov\'a and P. Honz\'ik, \emph{Rough maximal bilinear singular integrals},   
Collect. Math.  \textbf{70} (2019),  431--446.

\bibitem{Ca_Zy1956}
A. P. Calder\'on and A. Zygmund, \emph{On singular integrals},   
Amer. J. Math.  \textbf{78} (1956),  289--309.

\bibitem{Ch1988}
M. Christ,  \emph{Weak type $(1,1)$ bounds for rough operators I}, Ann. Math. \textbf{128} (1988), 19--42.

\bibitem{Ch_Ru1988}
M. Christ and J.-L. Rubio de Francia ,  \emph{Weak type $(1,1)$ bounds for rough operators II}, Invent. Math. \textbf{93} (1988), 225--237.




\bibitem{Co_Me1975} 
R. R. Coifman and Y. Meyer,  
\emph{On commutators of singular integrals and bilinear singular integrals}, 
 Trans. Amer. Math. Soc. {\bf 212} (1975), 315--331.



\bibitem{Co_Me1978_multi} R. R. Coifman and Y. Meyer,  
\emph{Commutateurs d' int\'egrales singuli\`eres et op\'erateurs multilin\'eaires}, 
Ann. Inst. Fourier (Grenoble) {\bf 28 } (1978),   177--202.  

\bibitem{Co_We1977}
R. R. Coifman and G. Weiss,  \emph{Extensions of Hardy spaces and their use in analysis}, Bull. Amer. Math. Soc. \textbf{83} (1977), 569-645.

\bibitem{Co1979}
W. C. Connett,  \emph{Singular integrals near $L^1$}, in Harmonic analysis in Euclidean spaces, Part 1 (Williamstown 1978), Proc. Sympos. Pure Math. \textbf{35} (1979), 163 --165.


\bibitem{Da1988}
I. Daubechies,  \emph{Orthonormal bases of compactly supported wavelets}, Comm. Pure Appl. Math. \textbf{41}
(1988), 909--996.




 \bibitem{Do_Park_Sl_submitted} G. Dosidis, B. Park, and L. Slav\'ikov\'a, 
\emph{Boundedness criteria for bilinear Fourier multipliers via shifted square function estimates}, 
submitted.




 \bibitem{Do_Sl2024} G. Dosidis and L. Slav\'ikov\'a, 
\emph{Multilinear singular integrals with homogeneous kernels near $L^1$}, 
 Math. Ann. \textbf{389} (2024), 2259--2271. 



\bibitem{Du_Ru1986}
J. Duoandikoetxea and J.-L. Rubio de Francia,  \emph{Maximal and singular integral operators via Fourier transform estimates}, Invent. Math. \textbf{84} (1986), 541--561.



\bibitem{Fe_St1971}
C. Fefferman and E. M. Stein,  \emph{Some maximal inequalities}, Amer. J. Math. \textbf{93} (1971) 107-115.



\bibitem{MFA}
L. Grafakos,  \emph{Modern Fourier Analysis}, 3rd edition, Graduate Texts in Mathematics 250, Springer, New York, 2014.

 

\bibitem{Gr_He_Ho2018}
L. Grafakos, D. He, and P. Honz\'ik, \emph{Rough bilinear singular integrals}, Adv. Math. \textbf{326} (2018), 54--78.


\bibitem{Gr_He_Ho_Park2023}
L. Grafakos, D. He,   P. Honz\'ik, and B. Park, \emph{Initial $L^2\times\cdots\times L^2 $ bounds for   multilinear operators},  Trans. Amer. Math. Soc. \textbf{376} (2023), 3445-3472. 


\bibitem{Gr_He_Ho_Park2024}
L. Grafakos, D. He,   P. Honz\'ik, and B. Park, \emph{On pointwise a.e. convergence of multilinear operators},  Can. J. Math. \textbf{76} (2024), 1005-1032. 

\bibitem{Gr_He_Ho_Park_JLMS}
L. Grafakos, D. He, P. Honz\'ik, and B. Park,  \emph{Multilinear rough singular integral operators}, J. London Math. Soc. \textbf{109} (2024), e12867, 35pp.

\bibitem{Gr_He_Sl2020}
L. Grafakos, D. He, and L. Slav\'ikov\'a, \emph{$L^2\times L^2\to L^1$ boundedness criteria}, Math. Ann. \textbf{376} (2020), 431--455.

\bibitem{Gr_Li_Lu_Zh2012}
L. Grafakos, L. Liu, S. Lu, and F. Zhao, \emph{The multilinear Marcinkiewicz interpolation theorem revisited: The behavior of the constant}, J. Funct. Anal. \textbf{262} (2012), 2289-2313.


\bibitem{Gr_Oh2014}
L. Grafakos and S. Oh,  \emph{The Kato-Ponce inequality}, Comm. Partial Differential Equations \textbf{39} (2014), 1128--1157. 


\bibitem{Gr_Park2021}
L. Grafakos and B.  Park,
\emph{Characterization of multilinear multipliers in terms of Sobolev space regularity.}
Trans. Amer. Math. Soc. {\bf374} (2021), 6483--6530.


\bibitem{Gr_Park2022}
L. Grafakos and B.  Park,
\emph{The multilinear H\"ormander multiplier theorem with a Lorentz-Sobolev condition}
Ann. Mat. Pur. Appl. {\bf 201} (2022), 111--126.


\bibitem{GT-Indiana} 
L. Grafakos and R. H. Torres, 
\emph{Maximal operator and weighted norm inequalities for multilinear singular integrals}, 	Indiana Univ.  Math. J.  \textbf{51} (2002),   1261--1276.

\bibitem{Gr_To2002} 
L. Grafakos and R. H. Torres, 
\emph{Multilinear Calder\'on-Zygmund Theory}, Adv. Math.  \textbf{165} (2002),  124--164.


\bibitem{He_Park2023}
D. He and B. Park,  \emph{Improved estimates for bilinear rough singular integrals}, Math. Ann. \textbf{386} (2023), 1951-1978.



\bibitem{Ho1988}
S. Hofmann, \emph{Weak type $(1,1)$ boundedness of singular integrals with nonsmooth kernels}, Proc. Amer. Math. Soc. \textbf{103} (1988), 260--264.




\bibitem{LHHLPPY}
 J. Lee, Y. Heo, S. Hong, J.B. Lee, B. Park, Y. Park, and C. Yang, \emph{The H\"ormander multiplier theorem for $n$-linear operators}, Math. Ann. \textbf{381} (2021), 499--555.

\bibitem{Lee_Park_accepted}
 J. Lee and B. Park, \emph{Trilinear Fourier multipliers on Hardy spaces}, J. Inst. Math. Jussieu \textbf{23} (2024), 2217-2278.

\bibitem{Mu_Pi_Ta_Th2004} 
C. Muscalu, J. Pipher, T. Tao,  and C. Thiele, \emph{Bi-parameter paraproducts}, Acta Math. \textbf{193} (2004), 269--296.

\bibitem{Mu_Sc2013} 
C. Muscalu and W. Schlag, \emph{Classical and Multilinear Harmonic Analysis, II}, Cambridge Studies in Advanced Mathematics, vol. 138, Cambridge University Press, Cambridge, 2013.

\bibitem{Park_IUMJ}
B. Park,  \emph{Equivalence of (quasi--)norms on a vector--valued function space and its applications to multilinear operators}, Indiana Univ. Math. J. \textbf{70} (2021), 1677--1716.


\bibitem{Park_submitted}
B. Park,  \emph{Vector-valued estimates for shifted operators}, submitted.

\bibitem{Pe1975}
J. Peetre,  \emph{On spaces of Triebel-Lizorkin type}, Ark. Mat. \textbf{13}
(1975), 123--130.

\bibitem{Ru}
J.-L. Rubio de Francia,  \emph{Maximal functions and Fourier transforms}, Duke Math. J. \textbf{53} (1986), 395--404.



\bibitem{Se1996}
A. Seeger, \emph{Singular integral operators with rough convolution kernels}, J. Amer. Math. Soc. \textbf{9} (1996), 95--105.

\bibitem{St2001}
A. Stefanov, \emph{Weak type estimates for certain Calder\'on-Zygmund singular integral operators}, Studia Math. \textbf{147} (2001), 1--13.


\bibitem{Ta1999}
T. Tao, \emph{The weak type $(1,1)$ of $L\log{L}$ homogeneous convolution operators}, Indiana Univ. Math. J. \textbf{48} (1999), 1547--1548.



\end{thebibliography}


\end{document}